\numberwithin{equation}{section}
\newtheorem{theorem}[subsection]{Theorem}
\newtheorem{lemma}[subsection]{Lemma}
\theoremstyle{definition}
\newtheorem{definition}[subsection]{Definition}
\theoremstyle{property}
\newcommand{\vast}{\bBigg@{2}}
\newcommand{\Vast}{\bBigg@{3}}
\begin{document}

\title{On new inequalities of Hermite-Hadamard type for functions whose fourth derivative absolute values are quasi-convex with applications}
\author{Imran Abbas Baloch, Basharat Rehman Ali}
\address{Imran Abbas Baloch\\Abdus Salam School of Mathematical Sciences\\
GC University, Lahore, Pakistan}\email{iabbasbaloch@gmail.com\\iabbasbaloch@sms.edu.pk}

\address{Basharat Rehman Ali\\Abdus Salam School of Mathematical Sciences\\
GC University, Lahore, Pakistan}\email{basharatrwp@gmail.com}

 \subjclass[2010]{Primary: 26D15;26D10. Secondary: 26A51;26A15}

 \keywords{Hermite-Hadamard type inequalities, Quasi-convex function, Power mean inequality}

\begin{abstract} We establish some new inequalities of Hermite-Hadamard type for functions whose fourth derivatives absolute values are quasi-convex. Further, we give new identity.Using this new identity, we establish similar inequalities for left-hand side of Hermite-Hadamard result.Also, we present applications to special means.
\end{abstract}

\maketitle

\section{\bf{Introduction}}
A function $f:I \subseteq \mathbb{R} \rightarrow \mathbb{R}$  is called convex function if
$ f(\lambda x + (1 - \lambda)y) \leq \lambda f(x) + (1 - \lambda)f(y)  $
for all $x,y \in I$ and $\lambda \in [0,1].$ Geometrically, this means that if $P,Q$ and $R$ are three distinct points on graph of $f$ with $Q$ between $P$ and $R$, then $Q$ is on or below chord $PR$. There are many results associated with convex functions in the area of inequalities, but one of them is the classical Hermite-Hadamard inequalities:
$$
f\vast( \frac{a + b}{2} \vast) \leq \frac{1}{b - a} \int_{a}^{b} f(x) dx \leq \frac{f(a) + f(b)}{2},
$$
for all $a,b \in I$, with $a < b.$\\
Recently, numerous authors [1-7] developed and discussed Hermite-Hadamard's inequalities in terms of refinements, counter-parts, generalizations and new Hemitte-Hadamard's type inequalities.\\
The notion of quasi-convex function which is generalization of convex function is defined as:
\begin{definition}
A function $F:[a,b] \rightarrow \mathbb{R}$ is called quasi-convex on $[a,b]$, if
$$
 f(\lambda x + (1 - \lambda)y) \leq \max\{f(x),f(y)\} ,\;\; \forall x,y \in [a,b].
$$
\end{definition}
Any convex function is quasi-convex but converse is not true in general(See for example \cite{AD1}). D.A Ion \cite{DAI} established inequalities of right hand side of Hermite-Hadamard's type inequality for functions whose derivatives in absolute values are quasi-convex functions. These inequalities appear in the following theorems:
\begin{theorem}
Let $f: I^{\circ} \subseteq \mathbb{R} \rightarrow \mathbb{R}$ be a function, differentiable on $I^{\circ}$ with $a,b \in I^{\circ}$ and $a < b$. If $|f'|$ is quasi-convex on $[a,b]$, then we have:

$$ \vast|  \frac{f(a) + f(b)}{2} - \frac{1}{b - a}\int_{a}^{b} f(x)dx \vast| \leq \frac{(b - a)}{4} \max\{|f'(a)|,|f'(b)|\}.$$
\end{theorem}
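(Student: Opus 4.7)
The plan is to reduce the left-hand side to a single integral in which $f'$ appears with a sign-changing kernel, then exploit the quasi-convexity of $|f'|$ to pull out $\max\{|f'(a)|,|f'(b)|\}$ and compute a trivial absolute-value integral.

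\textbf{Step 1: An integration-by-parts identity.} I would first establish the standard Montgomery-type identity
$$\frac{f(a)+f(b)}{2}-\frac{1}{b-a}\int_{a}^{b} f(x)\,dx=\frac{b-a}{2}\int_{0}^{1}(1-2t)\,f'\bigl(ta+(1-t)b\bigr)\,dt.$$
The way I would derive it is by integrating the right-hand side by parts, with $u=1-2t$ and $dv=f'(ta+(1-t)b)\,dt$; the boundary term produces $\frac{f(a)+f(b)}{2}$ after the change of variables $x=ta+(1-t)b$, while the remaining integral gives $-\frac{1}{b-a}\int_a^b f(x)\,dx$. This is a routine but essential calculation, and it is really the only non-mechanical ingredient.

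\textbf{Step 2: Triangle inequality and quasi-convexity.} Taking absolute values inside the integral yields
$$\Bigl|\tfrac{f(a)+f(b)}{2}-\tfrac{1}{b-a}\!\int_{a}^{b}\!f(x)\,dx\Bigr|\le \tfrac{b-a}{2}\int_{0}^{1}|1-2t|\,\bigl|f'(ta+(1-t)b)\bigr|\,dt.$$
Because $|f'|$ is quasi-convex on $[a,b]$ and the point $ta+(1-t)b$ is a convex combination of $a$ and $b$, Definition 1.1 (applied to $|f'|$) gives $|f'(ta+(1-t)b)|\le \max\{|f'(a)|,|f'(b)|\}$ uniformly in $t\in[0,1]$. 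Pulling this constant out of the integral leaves only $\int_0^1 |1-2t|\,dt$.

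\textbf{Step 3: Evaluating the scalar integral.} A direct split at $t=1/2$ yields $\int_0^1|1-2t|\,dt=\tfrac12$, so the bound collapses to $\tfrac{b-a}{4}\max\{|f'(a)|,|f'(b)|\}$, which is exactly the claimed inequality.

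The only step that requires any care is Step 1, the derivation of the identity; once it is in hand, Steps 2 and 3 are immediate. I do not expect a serious obstacle, since quasi-convexity is tailor-made to control the integrand pointwise by $\max\{|f'(a)|,|f'(b)|\}$ without any further Hölder-type manipulations.
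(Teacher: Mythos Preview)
Your proof is correct. The paper does not actually supply its own proof of this statement: Theorem~1.2 is quoted from Ion~\cite{DAI} as background, and the identity you use in Step~1 is precisely the Dragomir--Agarwal lemma on which Ion's original argument rests. So your approach coincides with the standard one underlying the cited result; there is nothing further to compare.
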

\begin{theorem}
Let $f: I^{\circ} \subseteq \mathbb{R} \rightarrow \mathbb{R}$ be a function, differentiable on $I^{\circ}$ with $a,b \in I^{\circ}$ and $a < b$. If $|f'|^{\frac{p}{p - 1}}$ is quasi-convex on $[a,b]$, then we have:
$$ \vast|  \frac{f(a) + f(b)}{2} - \frac{1}{b - a}\int_{a}^{b} f(x) dx\vast| \leq \frac{(b - a)}{2(p + 1)^{\frac{1}{p}}}( \max\{|f'(a)|^{\frac{p}{p - 1}},|f'(b)|^{\frac{p}{p - 1}}\})^{\frac{p - 1}{p}}.$$
\end{theorem}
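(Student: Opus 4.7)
The plan is to reduce the estimate to a single integral identity and then use H\"older's inequality. The identity needed is the standard Dragomir--Agarwal representation
\[
\frac{f(a)+f(b)}{2} - \frac{1}{b-a}\int_a^b f(x)\,dx \;=\; \frac{b-a}{2}\int_0^1 (1-2t)\,f'\bigl(ta+(1-t)b\bigr)\,dt,
\]
which is obtained from a single integration by parts in the variable $u = ta+(1-t)b$. I would state (or recall) this lemma first, since it puts the left--hand side in a form that is amenable to the quasi-convexity hypothesis on $|f'|^{p/(p-1)}$.

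Next I would take absolute values under the integral and apply H\"older's inequality with conjugate exponents $p$ and $q = p/(p-1)$:
\[
\int_0^1 |1-2t|\,\bigl|f'(ta+(1-t)b)\bigr|\,dt \;\le\; \Bigl(\int_0^1 |1-2t|^{p}\,dt\Bigr)^{1/p}\Bigl(\int_0^1 \bigl|f'(ta+(1-t)b)\bigr|^{p/(p-1)}\,dt\Bigr)^{(p-1)/p}.
\]
The first factor is an elementary computation: by symmetry about $t=1/2$,
\[
\int_0^1 |1-2t|^{p}\,dt \;=\; 2\int_0^{1/2}(1-2t)^{p}\,dt \;=\; \frac{1}{p+1},
\]
which supplies the constant $1/(p+1)^{1/p}$ appearing in the stated bound.

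For the second factor I would invoke quasi-convexity of $|f'|^{p/(p-1)}$ pointwise: since $ta+(1-t)b$ is a convex combination of $a$ and $b$,
\[
\bigl|f'(ta+(1-t)b)\bigr|^{p/(p-1)} \;\le\; \max\!\bigl\{|f'(a)|^{p/(p-1)},\,|f'(b)|^{p/(p-1)}\bigr\}
\]
for every $t\in[0,1]$. Integrating over $[0,1]$ preserves this bound (the right-hand side is constant in $t$), and raising to the power $(p-1)/p$ yields precisely the max-factor on the right of the theorem. Combining the three pieces, and multiplying by the prefactor $(b-a)/2$ from the identity, gives the claimed inequality.

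The main obstacle, such as it is, is verifying the integration-by-parts identity cleanly; everything afterwards is a mechanical application of H\"older plus the pointwise quasi-convexity bound. I do not foresee any genuinely subtle step, and in particular no regularity beyond ordinary differentiability of $f$ is needed, since $|f'|^{p/(p-1)}$ being quasi-convex already guarantees measurability and boundedness on $[a,b]$ so that all integrals in the argument converge.
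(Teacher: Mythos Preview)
Your argument is correct: the Dragomir--Agarwal identity, H\"older with exponents $p$ and $q=p/(p-1)$, the computation $\int_0^1|1-2t|^p\,dt=1/(p+1)$, and the pointwise quasi-convexity bound combine exactly to give the stated inequality. The paper itself does not prove this theorem---it is quoted from Ion \cite{DAI} as background---but your proof is the standard one and follows precisely the same template (integral identity $+$ H\"older $+$ quasi-convexity) that the paper uses for its own results such as Theorem~2.3.
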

\begin{theorem}
Let $f: I^{\circ} \subseteq \mathbb{R} \rightarrow \mathbb{R}$ be a function, twice differentiable on $I^{\circ}$ with $a,b \in I^{\circ}$ and $a < b$. If $|f''|$ is quasi-convex on $[a,b]$, then we have:

$$ \vast|  \frac{f(a) + f(b)}{2} - \frac{1}{b - a}\int_{a}^{b}f(x) dx \vast| \leq \frac{(b - a)^{2}}{12} \max\{|f''(a)|,|f''(b)|\}.$$
\end{theorem}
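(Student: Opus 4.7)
The plan is to derive an integral representation for the error
$$\frac{f(a)+f(b)}{2} - \frac{1}{b-a}\int_a^b f(x)\,dx$$
in terms of $f''$, and then bound the integrand pointwise using the quasi-convexity hypothesis. Concretely, I first aim to establish the identity
$$\frac{f(a)+f(b)}{2} - \frac{1}{b-a}\int_a^b f(x)\,dx = \frac{(b-a)^2}{2}\int_0^1 t(1-t)\, f''(ta+(1-t)b)\,dt,$$
which already pins down the shape of the inequality: the constant $\frac{1}{12}$ is what the right-hand side yields after one elementary computation.

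To prove the identity I would integrate by parts twice on the right-hand side. In the first integration by parts (with $u=t(1-t)$ and $dv$ an antiderivative of $f''(ta+(1-t)b)$), the boundary term vanishes because $t(1-t)=0$ at both $t=0$ and $t=1$. A second integration by parts turns the remaining integral of $(1-2t)\,f'(ta+(1-t)b)$ into a boundary contribution equal to $f(a)+f(b)$ plus an integral of $f(ta+(1-t)b)$; the substitution $x=ta+(1-t)b$ converts that last integral into $\frac{1}{b-a}\int_a^b f(x)\,dx$. The chain-rule factors $(a-b)$ produced by differentiating the composition assemble into exactly the prefactor $\frac{(b-a)^2}{2}$ claimed in the identity.

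With the identity in hand, the inequality follows directly. Passing absolute values under the integral gives
$$\left|\frac{f(a)+f(b)}{2} - \frac{1}{b-a}\int_a^b f(x)\,dx\right| \leq \frac{(b-a)^2}{2}\int_0^1 t(1-t)\,|f''(ta+(1-t)b)|\,dt,$$
and the quasi-convexity of $|f''|$ on $[a,b]$ yields $|f''(ta+(1-t)b)| \leq \max\{|f''(a)|,|f''(b)|\}$ for every $t\in[0,1]$, since $ta+(1-t)b$ is a convex combination of $a$ and $b$. Pulling this uniform bound out of the integral reduces matters to $\int_0^1 t(1-t)\,dt = \frac{1}{6}$, and multiplying produces the claimed constant $\frac{(b-a)^2}{12}$.

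I do not foresee a genuine obstacle in this argument: the quasi-convexity is used in the simplest possible way, as a uniform pointwise majorant, and the analytic work is confined to the derivation of the identity. The only step requiring modest care is bookkeeping the signs and factors of $(a-b)$ versus $(b-a)$ that arise from the chain rule inside the two integrations by parts, but these combine cleanly into the positive prefactor $\frac{(b-a)^2}{2}$ and cause no real difficulty.
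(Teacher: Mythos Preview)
Your proposal is correct and follows the standard route: an integral identity obtained by two integrations by parts, followed by the quasi-convexity bound and the evaluation $\int_0^1 t(1-t)\,dt = \tfrac{1}{6}$. The paper itself does not supply a proof of this particular statement---it is quoted from Ion's work \cite{DAI} as background---but your argument is exactly the method the paper adopts for its own results (Lemma~\ref{ML1} and Theorem~2.2 are the fourth-derivative analogue), so there is nothing to contrast.
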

In paper \cite{SSC}, S.Qaisar, S.Hussain, C. He established new refined inequalities of right hand side of Hermite-Hadamard result for the class of functions whose third derivatives at certain powers are quasi-convex functions as follow:
\begin{theorem}
Let $f: I \subseteq \mathbb{R} \rightarrow \mathbb{R}$ be thrice differentiable mapping on $I^{\circ}$ such that $f''' \in L[a,b]$, where $a,b \in I$ with $a < b$. If $|f'''|$ is quasi-convex on $[a,b]$, then we have the following inequality:
$$ \vast|  \frac{f(a) + f(b)}{2} - \frac{1}{b - a}\int_{a}^{b}f(x) dx - \frac{b - a}{12}[f'(b) - f'(a)]  \vast| \leq \frac{(b - a)^{3}}{192} \max\{|f'''(a)|,|f'''(b)|\}.$$
\end{theorem}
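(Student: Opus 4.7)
The proof will follow the standard template used in Theorems 1.2--1.4: first prove an integral identity expressing the quantity
\[
E(f;a,b) := \frac{f(a)+f(b)}{2} - \frac{1}{b-a}\int_{a}^{b} f(x)\,dx - \frac{b-a}{12}\bigl[f'(b)-f'(a)\bigr]
\]
as a single weighted integral of $f'''$ on $[a,b]$, then bound the weight in absolute value using quasi-convexity. Concretely, the plan is to establish a lemma of the shape
\[
E(f;a,b) = (b-a)^{3}\int_{0}^{1} K(t)\,f'''\bigl(ta+(1-t)b\bigr)\,dt,
\]
for an explicit polynomial kernel $K$, and then estimate the right-hand side.

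To construct $K$, I would set $g(t)=f(ta+(1-t)b)$, note that $g^{(k)}(t)=(-1)^{k}(b-a)^{k}f^{(k)}(ta+(1-t)b)$, and apply integration by parts three times to $\int_{0}^{1}K(t)g'''(t)\,dt$. This produces boundary terms in $g,g',g''$ together with the residual $-\int_{0}^{1}K'''(t)g(t)\,dt$. In order for the residual to be a multiple of $\frac{1}{b-a}\int_{a}^{b}f$, the kernel must satisfy $K'''\equiv \text{const}$, i.e. $K$ is cubic. Matching the desired combination on the left (no $f''$ terms, the specified coefficients of $f(a),f(b),f'(a),f'(b)$) forces the boundary conditions $K(0)=K(1)=0$, $K'(0)=K'(1)=-\tfrac{1}{12}$, $K''(0)=-K''(1)=\tfrac{1}{2}$. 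These six conditions uniquely determine the cubic, and I expect the answer to factor cleanly as
\[
K(t)=\tfrac{1}{12}\,t(1-t)(2t-1).
\]

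Granting the identity, the rest is routine. Taking absolute values and using quasi-convexity of $|f'''|$ gives
\[
\bigl|f'''\bigl(ta+(1-t)b\bigr)\bigr|\le \max\{|f'''(a)|,|f'''(b)|\}
\]
for every $t\in[0,1]$, so the bound reduces to computing $\int_{0}^{1}|K(t)|\,dt$. Since $t(1-t)(2t-1)$ is antisymmetric about $t=\tfrac{1}{2}$, this integral equals $2\int_{1/2}^{1}\tfrac{1}{12}t(1-t)(2t-1)\,dt$, which a direct computation evaluates to $\tfrac{1}{192}$. Multiplying by $(b-a)^{3}$ yields precisely the claimed constant $(b-a)^{3}/192$.

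The main obstacle, conceptually, is pinning down the correct kernel: one has to guess the right polynomial form (degree three, boundary vanishing at the endpoints, prescribed first and second derivatives at $0$ and $1$) so that three integrations by parts reproduce the Hermite--Hadamard-trapezoid combination plus the derivative correction term $\tfrac{b-a}{12}[f'(b)-f'(a)]$. Once the boundary conditions are correctly read off and $K$ is written down, the verification of the identity and the evaluation of $\int_{0}^{1}|K|$ are straightforward polynomial calculations, and the quasi-convex estimate is an immediate application of the definition.
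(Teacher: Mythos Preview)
Your argument is correct: the cubic kernel $K(t)=\tfrac{1}{12}\,t(1-t)(2t-1)$ satisfies exactly the boundary data you list, three integrations by parts give the identity
\[
E(f;a,b)=(b-a)^{3}\int_{0}^{1}K(t)\,f'''\bigl(ta+(1-t)b\bigr)\,dt,
\]
and $\int_{0}^{1}|K(t)|\,dt=\tfrac{1}{192}$ together with the pointwise quasi-convex bound on $|f'''|$ yields the stated inequality.

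Note, however, that this theorem is quoted in the paper's introduction as a result of Qaisar, Hussain and He (reference~\cite{SSC}); the present paper does not supply its own proof, so there is no in-paper argument to compare against directly. Your approach is nonetheless precisely the template that both the cited source and this paper (in Lemma~\ref{ML1} for the fourth-derivative analogue and Lemma~\ref{ML2} for the midpoint version) employ: produce a polynomial-kernel identity by repeated integration by parts, then apply quasi-convexity and evaluate the $L^{1}$ norm of the kernel. The only cosmetic difference is that the paper's own lemmas tend to state the kernel explicitly at the outset and verify the identity, whereas you derive $K$ from the required boundary conditions; the content is the same.
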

\begin{theorem}
Let $f: I \subseteq \mathbb{R} \rightarrow \mathbb{R}$ be three time differentiable mapping on $I^{\circ}$ such that $f''' \in L[a,b]$, where $a,b \in I$ with $a < b$. If $|f'''|^{\frac{p}{p - 1}}$ is quasi-convex on $[a,b]$, and $p > 1$, then we have the following inequality:
$$ \vast|  \frac{f(a) + f(b)}{2} - \frac{1}{b - a}\int_{a}^{b}f(x)dx - \frac{b - a}{12}[f'(b) - f'(a)]  \vast| \leq \frac{(b - a)^{3}}{96} \vast( \frac{1}{p + 1} \vast)^{\frac{1}{p}} (\max\{|f'''(a)|^{q},|f'''(b)|^{q}\})^{\frac{1}{q}}.$$
\end{theorem}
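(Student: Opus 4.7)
The plan is to follow the standard four-step template used for this class of Hermite--Hadamard type refinements: establish an integral identity, apply the triangle inequality, apply Hölder's inequality with conjugate exponents $p$ and $q=p/(p-1)$, and invoke the quasi-convexity hypothesis. Since this theorem is the $L^p$ counterpart of the immediately preceding theorem, both results should rest on the same underlying identity; only the final bounding step changes.

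First I would prove the integral identity
\[
\frac{f(a)+f(b)}{2} - \frac{1}{b-a}\int_a^b f(x)\,dx - \frac{b-a}{12}\bigl[f'(b)-f'(a)\bigr] = (b-a)^3\int_0^1 K(t)\,f'''\bigl(ta+(1-t)b\bigr)\,dt,
\]
where $K$ is an explicit cubic polynomial. To obtain it, set $h(t)=f(ta+(1-t)b)$ and compute $\int_0^1 K(t)\,h'''(t)\,dt$ by integrating by parts three times; the coefficients of $K$ are then determined by matching the boundary terms $K(i)h''(i)$, $K'(i)h'(i)$, $K''(i)h(i)$ for $i\in\{0,1\}$ against the three summands on the left, and by forcing the residual $-\int_0^1 K'''(t)h(t)\,dt$ to reproduce $\int_a^b f$. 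This system uniquely determines a cubic proportional to $t(1-t)(2t-1)$, and the $L^1$ norm of this kernel recovers the constant $\tfrac{1}{192}$ of the preceding theorem, which serves as a built-in consistency check.

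Next, the triangle inequality inside the integral followed by Hölder's inequality with exponents $p$ and $q=p/(p-1)$ yields
\[
\left|\frac{f(a)+f(b)}{2}-\frac{1}{b-a}\int_a^b f(x)\,dx-\frac{b-a}{12}[f'(b)-f'(a)]\right| \leq (b-a)^3\left(\int_0^1|K(t)|^p\,dt\right)^{\!1/p}\left(\int_0^1|f'''(ta+(1-t)b)|^q\,dt\right)^{\!1/q}.
\]
The quasi-convexity hypothesis on $|f'''|^q$ gives the pointwise bound $|f'''(ta+(1-t)b)|^q\leq \max\{|f'''(a)|^q,|f'''(b)|^q\}$, so the second factor is dominated by $(\max\{|f'''(a)|^q,|f'''(b)|^q\})^{1/q}$.

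The main obstacle is extracting the announced constant $\tfrac{1}{96}\bigl(\tfrac{1}{p+1}\bigr)^{1/p}$ from $\bigl(\int_0^1|K(t)|^p\,dt\bigr)^{1/p}$. Since $K$ is a genuine cubic, its $L^p$ norm does not reduce to an elementary function of $p$ in clean closed form, so the estimate is obtained via a pointwise majorization of the shape $|K(t)|\leq \lambda\,|1-2t|$---using $t(1-t)\leq \tfrac14$ together with the normalization of $K$---after which the identity $\int_0^1|1-2t|^p\,dt=\tfrac{1}{p+1}$ closes the argument. The bookkeeping that fixes $\lambda$ so that the resulting constant is exactly $\tfrac{1}{96}$, and so that the bound collapses to $\tfrac{1}{192}$ as $p\to 1^+$ (thereby recovering the preceding theorem), is the only non-routine arithmetic in the proof.
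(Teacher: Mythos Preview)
Your overall template (identity, triangle inequality, H\"older, quasi-convexity) is right, and the kernel you identify is correct: up to sign the identity reads
\[
\frac{f(a)+f(b)}{2}-\frac{1}{b-a}\int_a^b f-\frac{b-a}{12}[f'(b)-f'(a)]=\frac{(b-a)^3}{12}\int_0^1 t(1-t)(2t-1)\,f'''\bigl(ta+(1-t)b\bigr)\,dt,
\]
and indeed $\int_0^1 |t(1-t)(2t-1)|\,dt=\tfrac1{16}$ recovers the constant $\tfrac{1}{192}$ of the $q=1$ theorem.

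The gap is in your H\"older step. Putting the entire kernel into the $L^p$ factor and then using the pointwise bound $t(1-t)\le\tfrac14$ gives
\[
\Bigl(\int_0^1 |t(1-t)(2t-1)|^p\,dt\Bigr)^{1/p}\le \tfrac14\Bigl(\int_0^1|2t-1|^p\,dt\Bigr)^{1/p}=\tfrac14\Bigl(\tfrac{1}{p+1}\Bigr)^{1/p},
\]
and this is sharp for that majorization since $\sup_t t(1-t)=\tfrac14$. Hence your route yields $\frac{(b-a)^3}{48}\bigl(\tfrac{1}{p+1}\bigr)^{1/p}$, \emph{twice} the stated constant. Your own consistency check flags this: as $p\to1^+$ your bound tends to $\tfrac1{96}$, not $\tfrac1{192}$. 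There is no ``bookkeeping'' that fixes $\lambda$ to reach $\tfrac1{96}$; the loss is intrinsic to replacing $t(1-t)$ by its supremum before integrating.

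The argument that actually produces $\tfrac1{96}$ does \emph{not} majorize the kernel pointwise. One first recentres the identity (substitute $t=\tfrac{1\pm s}{2}$ on the two halves) to obtain the equivalent form with coefficient $\tfrac{(b-a)^3}{96}$ and nonnegative kernel $s(1-s^2)$ on $[0,1]$, appearing in a difference of two $f'''$ evaluations. Then one applies H\"older with respect to the \emph{weight} $s\,ds$, i.e.\ splits $s(1-s^2)|f'''|=\bigl[s^{1/p}(1-s^2)\bigr]\cdot\bigl[s^{1/q}|f'''|\bigr]$, so that both resulting integrals are computed exactly:
\[
\int_0^1 s(1-s^2)^p\,ds=\frac{1}{2(p+1)},\qquad \int_0^1 s\,ds=\frac12,
\]
and quasi-convexity bounds the second factor by $\tfrac12\max\{|f'''(a)|^q,|f'''(b)|^q\}$. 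Multiplying out gives precisely $\frac{(b-a)^3}{96}\bigl(\tfrac{1}{p+1}\bigr)^{1/p}$. (This is the same device the paper uses in its own proof of the midpoint analogue, Theorem~2.7, with kernel $\lambda(1-4\lambda^2)$ and weight $\lambda\,d\lambda$.) The key idea you are missing is to keep part of the kernel as a H\"older weight so that the $p$-integral reduces, via a quadratic substitution, to $\int_0^1 u^p\,du=\tfrac{1}{p+1}$ without any over-estimation.
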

\begin{theorem}
Let $f: I \subseteq \mathbb{R} \rightarrow \mathbb{R}$ be thrice differentiable mapping on $I^{\circ}$ such that $f''' \in L[a,b]$, where $a,b \in I$ with $a < b$. If $|f'''|^{q}$ is quasi-convex on $[a,b]$, and $q \geq 1$, then we have following inequality:
$$ \vast|  \frac{f(a) + f(b)}{2} - \frac{1}{b - a}\int_{a}^{b}f(x) dx - \frac{b - a}{12}[f'(b) - f'(a)]  \vast| \leq \frac{(b - a)^{3}}{192} (\max\{|f'''(a)|^{q},|f'''(b)|^{q}\})^{\frac{1}{q}}.$$
\end{theorem}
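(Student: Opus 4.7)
The plan is to mirror the standard power-mean template: start from the same integral identity that drives Theorem 1.5, apply the power-mean inequality in place of the trivial ``pull out the max'' step, and then invoke quasi-convexity of $|f'''|^{q}$ to recover the same constant $\tfrac{1}{192}$ as in the $q=1$ case.

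First, I would invoke (or re-prove, by repeated integration by parts) an identity of the form
\[
\frac{f(a)+f(b)}{2} - \frac{1}{b-a}\int_a^b f(x)\,dx - \frac{b-a}{12}\bigl[f'(b)-f'(a)\bigr]
= (b-a)^{3}\int_0^1 K(t)\, f'''\bigl(ta+(1-t)b\bigr)\,dt,
\]
where $K(t)$ is the same kernel that underlies Theorem 1.5. From Theorem 1.5 we already know that $\int_0^1 |K(t)|\,dt = \tfrac{1}{192}$, so I would not need to recompute this integral from scratch; I would only need to re-derive it or cite it.

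Next, taking absolute values, I would apply the power-mean inequality to the measure $|K(t)|\,dt$. Writing $h(t) = |f'''(ta+(1-t)b)|$, for $q\ge 1$ one has
\[
\int_0^1 |K(t)|\, h(t)\,dt
\;\le\; \Bigl(\int_0^1 |K(t)|\,dt\Bigr)^{1-1/q}\Bigl(\int_0^1 |K(t)|\, h(t)^{q}\,dt\Bigr)^{1/q}.
\]
This is the step where the hypothesis $q\ge 1$ is essential, and it is what replaces the crude $L^{\infty}$ estimate used in the $q=1$ theorem.

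Then, since $|f'''|^{q}$ is quasi-convex on $[a,b]$, I have $h(t)^{q} \le \max\{|f'''(a)|^{q}, |f'''(b)|^{q}\}$ for all $t\in[0,1]$. Plugging this into the second factor and pulling out the constant yields
\[
\int_0^1 |K(t)|\,h(t)\,dt
\;\le\; \Bigl(\int_0^1 |K(t)|\,dt\Bigr)^{1-1/q}\Bigl(\int_0^1 |K(t)|\,dt\Bigr)^{1/q}\bigl(\max\{|f'''(a)|^{q},|f'''(b)|^{q}\}\bigr)^{1/q},
\]
so the two $|K|$-factors collapse back to $\int_0^1 |K(t)|\,dt = \tfrac{1}{192}$, and multiplying by $(b-a)^{3}$ gives exactly the claimed bound.

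The main (and only) obstacle is really bookkeeping rather than anything conceptual: one has to be sure the identity producing the $\tfrac{1}{12}[f'(b)-f'(a)]$ correction has the form above with a \emph{single} kernel $K(t)$ whose $L^{1}$-norm is $\tfrac{1}{192}$, so that the power-mean telescoping yields the same constant as in Theorem 1.5; if the identity were instead written on two subintervals $[0,\tfrac12]$ and $[\tfrac12,1]$ with different kernels (as is common for this type of estimate), one would just apply the power-mean inequality on each piece and sum, again recovering $\tfrac{1}{192}$ by consistency with the $q=1$ case.
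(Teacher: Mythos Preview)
Your proposal is correct and follows exactly the template the paper uses: start from the integral identity, apply the power-mean inequality with the kernel as weight, bound the $q$th-power integrand via quasi-convexity, and let the two kernel factors recombine to recover the $q=1$ constant $\tfrac{1}{192}$. Note that this particular theorem is quoted from \cite{SSC} rather than proved in the present paper, but the paper's own proofs of the analogous results (the fourth-derivative and midpoint versions) follow precisely the scheme you outline, so your approach matches both the cited source's method and the paper's.
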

In this paper, we establish new refined inequalities of the right hand side of Hermite-Hadamard result for the class of functions whose fourth derivative at certain powers are quasi-convex functions. Further, we establish new identity using which, we establish new refined inequalities of left hand side of Hermit-Hadamard result for the same class of functions considered earlier.

\section{\textbf{Main Results}}
For establishing new inequalities of right hand side of Hermite-Hadamard result for the functions whose fourth derivative at certain powers are quasi-convex, we need the following identity:
\begin{lemma}\label{ML1}
Let Let $f: I \subseteq \mathbb{R} \rightarrow \mathbb{R}$ be four times differentiable mapping on $I^{\circ}$ such that $f^{(iv)} \in L[a,b]$, where $a,b \in I$ with $a < b$, then
$$ \frac{1}{b - a}\int_{a}^{b}f(x) dx  +   \frac{b - a}{12}[f'(b) - f'(a)] - \frac{f(a) + f(b)}{2}  =  \frac{(b - a)^{4}}{24} \int_{0}^{1} (\lambda(1 - \lambda))^{2} f^{(iv)} (a\lambda + (1 - \lambda)b) d\lambda$$
\end{lemma}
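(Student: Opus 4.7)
The plan is to establish the identity by starting from the right-hand side and applying integration by parts four times with respect to $\lambda$. First I would absorb the linear path $u=a\lambda+(1-\lambda)b$ into the notation by setting $g(\lambda):=f(a\lambda+(1-\lambda)b)$, so that $g^{(k)}(\lambda)=(a-b)^{k}f^{(k)}(a\lambda+(1-\lambda)b)$, and by writing the weight as $p(\lambda):=\lambda^{2}(1-\lambda)^{2}$. The right-hand integral then becomes $\frac{1}{(a-b)^{4}}\int_{0}^{1}p(\lambda)\,g^{(iv)}(\lambda)\,d\lambda$, and the factor $(b-a)^{4}/24$ absorbs the $(a-b)^{4}=(b-a)^{4}$ in front.

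The key structural observation is that $p$ is tailored so that $p(0)=p(1)=0$ and $p'(0)=p'(1)=0$, which makes the boundary terms vanish in the first two integrations by parts and reduces the computation to $\int_{0}^{1}p''(\lambda)\,g''(\lambda)\,d\lambda$. I would then compute $p''(\lambda)=2-12\lambda+12\lambda^{2}$, $p'''(\lambda)=-12+24\lambda$, $p^{(iv)}(\lambda)=24$, and record the boundary values $p''(0)=p''(1)=2$, $p'''(0)=-12$, $p'''(1)=12$.

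Two more integrations by parts then pick up the nontrivial boundary contributions. The third stage gives $[p''(\lambda)g'(\lambda)]_{0}^{1}=2(a-b)(f'(a)-f'(b))$, which is the source of the $\tfrac{b-a}{12}[f'(b)-f'(a)]$ term after dividing by $24$ and using $(a-b)=-(b-a)$. The fourth stage gives $[p'''(\lambda)g(\lambda)]_{0}^{1}=12(f(a)+f(b))$, which produces the trapezoid term $\tfrac{f(a)+f(b)}{2}$ with the right sign. The remaining integral is $\int_{0}^{1}p^{(iv)}(\lambda)g(\lambda)\,d\lambda=24\int_{0}^{1}g(\lambda)\,d\lambda$, and the substitution $u=a\lambda+(1-\lambda)b$ converts it into $\frac{24}{b-a}\int_{a}^{b}f(u)\,du$.

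I would then collect the three resulting pieces, multiply by $\frac{(b-a)^{4}}{24}$ on both sides, and verify that they line up with the three terms on the left-hand side of the claimed identity. Conceptually nothing is hard here; the only real obstacle is sign bookkeeping, particularly keeping track of the odd number of $(a-b)$ factors produced by the chain rule on $g'$ versus the even number on $g$ and $g''$, and ensuring that the boundary evaluations $p''$, $p'''$ enter with the correct signs at $0$ and $1$. Once those signs are correctly managed, the identity drops out by direct comparison.
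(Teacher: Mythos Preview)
Your approach is correct. The paper actually states Lemma~\ref{ML1} without proof, so there is nothing to compare against directly; however, the paper does prove the companion Lemma~\ref{ML2} by exactly the method you outline---repeated integration by parts against a polynomial weight chosen so that the early boundary terms vanish---so your argument is fully in the spirit of the paper. Your computations of $p''$, $p'''$, $p^{(iv)}$ and their boundary values are correct, and the sign bookkeeping you flag does work out: after dividing $\int_0^1 p\,g^{(iv)}$ by $24$ one obtains precisely $\frac{1}{b-a}\int_a^b f + \frac{b-a}{12}[f'(b)-f'(a)] - \frac{f(a)+f(b)}{2}$, as claimed.
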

\begin{theorem}
Let $f: I \subseteq \mathbb{R} \rightarrow \mathbb{R}$ be a four times differentiable mapping on $I^{\circ}$ such that $f^{(iv)} \in L[a,b]$, where $a,b \in I$ with $a < b$. If $|f^{iv}|$ is quasi-convex on $[a,b]$, then we have the following inequality:
\begin{equation}\label{ME1}
\vast|  \frac{f(a) + f(b)}{2} - \frac{1}{b - a}\int_{a}^{b}f(x)dx - \frac{b - a}{12}[f'(b) - f'(a)]  \vast| \leq \frac{(b - a)^{4}}{720} \max\{|f^{(iv)}(a)|,|f^{(iv)}(b)|\}.
\end{equation}
\end{theorem}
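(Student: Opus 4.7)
The proof should be a direct application of Lemma~\ref{ML1} together with the quasi-convexity hypothesis. The plan is to start from the identity
\[
\frac{1}{b - a}\int_{a}^{b}f(x)\,dx + \frac{b - a}{12}[f'(b) - f'(a)] - \frac{f(a) + f(b)}{2} = \frac{(b - a)^{4}}{24} \int_{0}^{1} (\lambda(1 - \lambda))^{2} f^{(iv)}(a\lambda + (1 - \lambda)b)\,d\lambda,
\]
take absolute values of both sides, and move the absolute value inside the integral by the triangle inequality. This reduces the problem to bounding
\[
\frac{(b-a)^4}{24}\int_0^1 (\lambda(1-\lambda))^2\, |f^{(iv)}(a\lambda + (1-\lambda)b)|\,d\lambda.
\]

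Next I would invoke the quasi-convexity of $|f^{(iv)}|$ on $[a,b]$: for every $\lambda\in[0,1]$ the point $a\lambda + (1-\lambda)b$ lies in $[a,b]$, so
\[
|f^{(iv)}(a\lambda + (1-\lambda)b)| \leq \max\{|f^{(iv)}(a)|, |f^{(iv)}(b)|\}.
\]
This constant bound can be pulled outside the integral, leaving only the elementary integral $\int_0^1 (\lambda(1-\lambda))^2\,d\lambda$ to evaluate.

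The final step is this computation. Expanding $(\lambda(1-\lambda))^2 = \lambda^2 - 2\lambda^3 + \lambda^4$ and integrating termwise yields $\tfrac{1}{3} - \tfrac{1}{2} + \tfrac{1}{5} = \tfrac{1}{30}$ (equivalently, this is the Beta integral $B(3,3) = \tfrac{(2!)^2}{5!} = \tfrac{1}{30}$). Combining this with the prefactor $(b-a)^4/24$ gives $(b-a)^4/720$, which is exactly the constant appearing on the right-hand side of \eqref{ME1}.

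There is essentially no obstacle here: the whole argument is mechanical once Lemma~\ref{ML1} is in hand. The only place where a slip is possible is the arithmetic in the Beta-type integral, so I would double-check that $\tfrac{1}{24}\cdot\tfrac{1}{30} = \tfrac{1}{720}$ before writing the final line.
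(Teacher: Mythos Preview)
Your proposal is correct and follows exactly the same route as the paper's own proof: apply Lemma~\ref{ML1}, move the absolute value inside the integral, use quasi-convexity of $|f^{(iv)}|$ to replace the integrand by the constant $\max\{|f^{(iv)}(a)|,|f^{(iv)}(b)|\}$, and evaluate $\int_0^1(\lambda(1-\lambda))^2\,d\lambda=\tfrac{1}{30}$. The only addition you make is the explicit expansion (and the Beta-function remark) for that last integral, which the paper leaves implicit.
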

\begin{proof}
Using Lemma \ref{ML1} and quasi-convexity of $|f^{(iv)}|$, we get

\begin{eqnarray*}
\vast|  \frac{f(a) + f(b)}{2} - \frac{1}{b - a}\int_{a}^{b}f(x)dx - \frac{b - a}{12}[f'(b) - f'(a)]  \vast| &\leq& \frac{(b - a)^{4}}{24} \int_{0}^{1} (\lambda(1 - \lambda))^{2} \big|f^{(iv)} (a\lambda + (1 - \lambda)b)\big| d\lambda\\
&\leq& \frac{(b - a)^{4}}{24} \max\{ |f^{(iv)}(a)|,|f^{(iv)}(a)|\}\int_{0}^{1} (\lambda(1 - \lambda))^{2}d\lambda\\
&=&  \frac{(b - a)^{4}}{720} \max\{ |f^{(iv)}(a)|,|f^{(iv)}(a)|\}
\end{eqnarray*}
the proof is completed.
\end{proof}
\begin{theorem}
Let $f: I \subseteq \mathbb{R} \rightarrow \mathbb{R}$ be a four times differentiable mapping on $I^{\circ}$ such that $f^{(iv)} \in L[a,b]$, where $a,b \in I$ with $a < b$. If $|f^{(iv)}|^{\frac{p}{p - 1}}$ is quasi-convex on $[a,b]$, and $p > 1$, then we have the following inequality:
 \begin{equation}\label{ME2}
 \vast|  \frac{f(a) + f(b)}{2} - \frac{1}{b - a}\int_{a}^{b}f(x)dx - \frac{b - a}{12}[f'(b) - f'(a)]  \vast| \leq \frac{(b - a)^{4}}{24} \beta^{\frac{1}{p}}(2p +1, 2p + 1) (\max\{|f^{(iv)}(a)|^{q},|f^{(iv)}(b)|^{q}\})^{\frac{1}{q}},
 \end{equation}
where $q = \frac{p}{p - 1}.  $
\end{theorem}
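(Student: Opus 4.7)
The plan is to start from the identity in Lemma \ref{ML1}, take absolute values inside the integral, and then apply Hölder's inequality with the conjugate exponents $p$ and $q=p/(p-1)$. This is the standard refinement strategy that turns the $L^1$ estimate used in the previous theorem into an $L^p$--$L^q$ split, which is better suited for exploiting the hypothesis on $|f^{(iv)}|^q$ rather than $|f^{(iv)}|$ itself.

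Concretely, I would first write
\begin{equation*}
\left|\frac{f(a)+f(b)}{2} - \frac{1}{b-a}\int_a^b f(x)\,dx - \frac{b-a}{12}[f'(b)-f'(a)]\right| \le \frac{(b-a)^4}{24}\int_0^1 (\lambda(1-\lambda))^2\,\bigl|f^{(iv)}(a\lambda+(1-\lambda)b)\bigr|\,d\lambda,
\end{equation*}
exactly as in the proof of the previous theorem. Then I would apply Hölder's inequality in the form
\begin{equation*}
\int_0^1 (\lambda(1-\lambda))^2\,\bigl|f^{(iv)}(a\lambda+(1-\lambda)b)\bigr|\,d\lambda \le \left(\int_0^1 (\lambda(1-\lambda))^{2p}\,d\lambda\right)^{1/p}\left(\int_0^1 \bigl|f^{(iv)}(a\lambda+(1-\lambda)b)\bigr|^q d\lambda\right)^{1/q}.
\end{equation*}

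The first factor is evaluated by recognizing the defining integral of the beta function: $\int_0^1 \lambda^{2p}(1-\lambda)^{2p}\,d\lambda=\beta(2p+1,2p+1)$, which accounts for the $\beta^{1/p}(2p+1,2p+1)$ appearing in the bound. For the second factor, the quasi-convexity of $|f^{(iv)}|^q$ gives the pointwise estimate
\begin{equation*}
\bigl|f^{(iv)}(a\lambda+(1-\lambda)b)\bigr|^q \le \max\{|f^{(iv)}(a)|^q,|f^{(iv)}(b)|^q\}
\end{equation*}
for every $\lambda\in[0,1]$, so its integral over $[0,1]$ is bounded by the same maximum. Putting the two pieces together yields the required inequality.

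I do not expect any genuine obstacle: the argument is a direct templated application of Hölder plus quasi-convexity, and the only nonroutine step is the identification of $\int_0^1(\lambda(1-\lambda))^{2p}d\lambda$ as $\beta(2p+1,2p+1)$, which is immediate from the standard formula $\beta(x,y)=\int_0^1 t^{x-1}(1-t)^{y-1}\,dt$. The only small things to keep track of are that the exponent $2$ gets promoted to $2p$ after raising to the $p$th power inside Hölder, and that the relation $q=p/(p-1)$ is the usual Hölder conjugacy $1/p+1/q=1$.
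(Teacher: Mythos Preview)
Your proposal is correct and follows essentially the same approach as the paper: start from Lemma~\ref{ML1}, apply H\"older's inequality with exponents $p$ and $q$, identify $\int_0^1(\lambda(1-\lambda))^{2p}\,d\lambda=\beta(2p+1,2p+1)$, and bound the $q$-integral via the quasi-convexity of $|f^{(iv)}|^q$. There is nothing to add or correct.
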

\begin{proof}
Using Lemma \ref{ML1}, Holder's inequality and quasi-convexity of $|f^{(iv)}|^{\frac{p}{p - 1}}$, we get
$$
\vast|  \frac{f(a) + f(b)}{2} - \frac{1}{b - a}\int_{a}^{b}f(x)dx - \frac{b - a}{12}[f'(b) - f'(a)]  \vast| $$
$$\leq \frac{(b - a)^{4}}{24} \int_{0}^{1} (\lambda(1 - \lambda))^{2} \big|f^{(iv)} (a\lambda + (1 - \lambda)b)\big| d\lambda$$
$$\leq \frac{(b - a)^{4}}{24}\vast(\int_{0}^{1} (\lambda(1 - \lambda))^{2p}d\lambda\vast)^{\frac{1}{p}} \vast(\int_{0}^{1}  \big|f^{(iv)} (a\lambda + (1 - \lambda)b)|^{q}\vast)^{\frac{1}{q}}$$
$$\leq \frac{(b - a)^{4}}{24}\vast(\int_{0}^{1} (\lambda(1 - \lambda))^{2p}d\lambda\vast)^{\frac{1}{p}} (\max\{|f^{(iv)}(a)|^{q},|f^{(iv)}(b)|^{q}\})^{\frac{1}{q}}$$
It is easy to note that
$$\beta(2p + 1,2p + 1)    =  \int_{0}^{1} (\lambda(1 - \lambda))^{2p}d\lambda $$
which completes the proof.
\end{proof}
\begin{theorem}
Let $f: I \subseteq \mathbb{R} \rightarrow \mathbb{R}$ be a four times differentiable mapping on $I^{\circ}$ such that $f^{(iv)} \in L[a,b]$, where $a,b \in I$ with $a < b$. If $|f^{{iv}}|^{q}$ is quasi-convex on $[a,b]$, and $q \geq 1$, then we have following inequality:
 \begin{equation}\label{ME3}
 \vast|  \frac{f(a) + f(b)}{2} - \frac{1}{b - a}\int_{a}^{b}f(x)dx - \frac{b - a}{12}[f'(b) - f'(a)]  \vast| \leq \frac{(b - a)^{4}}{720}  (\max\{|f^{(iv)}(a)|^{q},|f^{((iv))}(b)|^{q}\})^{\frac{1}{q}}.
 \end{equation}
\end{theorem}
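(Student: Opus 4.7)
The plan is to start from the integral identity of Lemma \ref{ML1} and combine it with the power-mean form of Hölder's inequality (the version appropriate to $q\geq 1$), exactly as is standard in this family of Hermite--Hadamard results. The estimate for $q=1$ is already the content of inequality \eqref{ME1}, so the work is in the case $q>1$, and the cleanest device is to split the weight $(\lambda(1-\lambda))^{2}$ as $(\lambda(1-\lambda))^{2(1-1/q)}\cdot (\lambda(1-\lambda))^{2/q}$ so that the total weight in both factors of Hölder is again $(\lambda(1-\lambda))^{2}$; this is what makes the bound come out independent of $p$ and sharp enough to give the constant $1/720$.

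Concretely, by Lemma \ref{ML1} the left-hand side is bounded by
\[
\frac{(b-a)^{4}}{24}\int_{0}^{1}(\lambda(1-\lambda))^{2}\,\bigl|f^{(iv)}(a\lambda+(1-\lambda)b)\bigr|\,d\lambda .
\]
Applying the power-mean inequality with exponents $1-1/q$ and $1/q$ to the above integrand yields
\[
\int_{0}^{1}(\lambda(1-\lambda))^{2}\bigl|f^{(iv)}\bigr|\,d\lambda
\;\leq\;\Bigl(\int_{0}^{1}(\lambda(1-\lambda))^{2}d\lambda\Bigr)^{1-1/q}
\Bigl(\int_{0}^{1}(\lambda(1-\lambda))^{2}\bigl|f^{(iv)}\bigr|^{q}d\lambda\Bigr)^{1/q}.
\]
Then one uses quasi-convexity of $|f^{(iv)}|^{q}$ to replace $|f^{(iv)}(a\lambda+(1-\lambda)b)|^{q}$ by $\max\{|f^{(iv)}(a)|^{q},|f^{(iv)}(b)|^{q}\}$ and pulls this constant out of the inner integral. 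Both remaining $\lambda$-integrals are the same Beta integral $\int_{0}^{1}(\lambda(1-\lambda))^{2}d\lambda=B(3,3)=1/30$, so the two factors combine to a single $1/30$, giving the claimed constant $\frac{1}{24}\cdot\frac{1}{30}=\frac{1}{720}$.

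There is no real obstacle: everything reduces to a routine Beta-function evaluation once the right splitting in Hölder is chosen. The only thing to be careful about is the exponent bookkeeping $\tfrac{1-1/q}{1}+\tfrac{1/q}{1}=1$ and the fact that quasi-convexity (unlike convexity) lets us replace $|f^{(iv)}|^{q}$ by a pointwise maximum without producing any extra $1/2$-type average constant, which is exactly why the same $1/720$ appears here as in Theorem with $|f^{(iv)}|$ quasi-convex. Thus the case $q=1$ is recovered by taking $q=1$ in the argument (where the power-mean step is trivial), and the case $q>1$ is handled by the display above.
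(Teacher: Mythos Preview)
Your proof is correct and follows essentially the same route as the paper's own argument: Lemma~\ref{ML1}, then the power-mean (H\"older) inequality with the weight $(\lambda(1-\lambda))^{2}$ kept in both factors, then quasi-convexity of $|f^{(iv)}|^{q}$, and finally the evaluation $\int_{0}^{1}(\lambda(1-\lambda))^{2}\,d\lambda=1/30$ to obtain the constant $1/720$. There is nothing to add or correct.
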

\begin{proof}
Using Lemma \ref{ML1}, power mean inequality and quasi-convexity of $|f^{(iv)}|^{q}$, we get
$$
\vast|  \frac{f(a) + f(b)}{2} - \frac{1}{b - a}\int_{a}^{b}f(x)dx - \frac{b - a}{12}[f'(b) - f'(a)]  \vast| $$
$$\leq \frac{(b - a)^{4}}{24} \int_{0}^{1} (\lambda(1 - \lambda))^{2} \big|f^{(iv)} (a\lambda + (1 - \lambda)b)\big| d\lambda$$
$$\leq \frac{(b - a)^{4}}{24}\vast(\int_{0}^{1} (\lambda(1 - \lambda))^{2}d\lambda\vast)^{1 - \frac{1}{q}} \vast(\int_{0}^{1}(\lambda(1 - \lambda))^{2}  \big|f^{(iv)} (a\lambda + (1 - \lambda)b)|^{q}\vast)^{\frac{1}{q}}$$
$$\leq \frac{(b - a)^{4}}{24}\vast(\frac{1}{30}\vast)^{1 - \frac{1}{q}} \vast(\frac{1}{30}\max\{|f^{(iv)}(a)|^{q},|f^{(iv)}(b)|^{q}\}\vast)^{\frac{1}{q}}$$
$$= \frac{(b - a)^{4}}{720}  (\max\{|f^{(iv)}(a)|^{q},|f^{(iv)}(b)|^{q}\})^{\frac{1}{q}}.$$
which completes the proof.
\end{proof}
Now, to develop new refined inequalities of left hand side of Hermite-Hadamard result for the class of functions whose third derivatives at certain powers are quasi-convex, we need the following identity:
\begin{lemma}\label{ML2}
$$f\vast( \frac{a + b}{2} \vast) - \frac{1}{b - a} \int_{a}^{b} f(x) dx + \frac{b - a}{24}[f'(b) - f'(a)]$$
 $$= \frac{(b - a)^{3}}{24} \vast[ \int_{0}^{\frac{1}{2}} \lambda ( 1 - 2\lambda) ( 1 + 2\lambda) f'''(\lambda a + (1 - \lambda)b)d\lambda - \int_{0}^{\frac{1}{2}} \lambda ( 1 - 2\lambda) ( 1 + 2\lambda) f'''(\lambda b + (1 - \lambda)a)d\lambda \vast] $$
\end{lemma}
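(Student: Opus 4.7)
The plan is to evaluate the right-hand side by integrating by parts three times in each of the two integrals, so as to convert $f'''$ all the way down to $f$. Denote
\[
I_{1}=\int_{0}^{1/2}\lambda(1-2\lambda)(1+2\lambda)\,f'''(\lambda a+(1-\lambda)b)\,d\lambda,\qquad I_{2}=\int_{0}^{1/2}\lambda(1-2\lambda)(1+2\lambda)\,f'''(\lambda b+(1-\lambda)a)\,d\lambda,
\]
and note that the kernel expands as $K(\lambda)=\lambda(1-4\lambda^{2})=\lambda-4\lambda^{3}$, whose derivatives are $K'(\lambda)=1-12\lambda^{2}$ and $K''(\lambda)=-24\lambda$. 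The crucial numerical facts I will use repeatedly are $K(0)=K(1/2)=0$, $K'(0)=1$, $K'(1/2)=-2$, and $K''(0)=0$, $K''(1/2)=-12$; these are precisely what makes the identity work.

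First I would handle $I_{1}$. One integration by parts with $u=K(\lambda)$, $dv=f'''(\lambda a+(1-\lambda)b)\,d\lambda$ kills the boundary contribution entirely (since $K$ vanishes at both endpoints) and produces $-\tfrac{1}{a-b}\int_{0}^{1/2}K'(\lambda)\,f''(\lambda a+(1-\lambda)b)\,d\lambda$. A second integration by parts with $u=K'(\lambda)$ leaves a boundary term proportional to $-2f'((a+b)/2)-f'(b)$ plus the integral $\tfrac{24}{(a-b)^{2}}\int_{0}^{1/2}\lambda\,f'(\lambda a+(1-\lambda)b)\,d\lambda$. A third integration by parts (with $u=\lambda$) produces a boundary term $\tfrac{1}{2(a-b)}f((a+b)/2)$ and an honest integral of $f$; the change of variables $x=\lambda a+(1-\lambda)b$ rewrites this last integral as $-\tfrac{1}{(a-b)^{2}}\int_{(a+b)/2}^{b}f(x)\,dx$.

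The calculation for $I_{2}$ is identical with the roles of $a$ and $b$ swapped, which merely flips the sign of $a-b$ and swaps the endpoints of the inner integral. When I subtract $I_{2}$ from $I_{1}$, the $f'((a+b)/2)$ contributions (which carry the same sign in both computations) cancel, the remaining first-derivative terms combine to give $f'(a)-f'(b)$, the $f((a+b)/2)$ terms add since the prefactors $1/(a-b)$ have opposite signs, and the two $f$-integrals over $[a,(a+b)/2]$ and $[(a+b)/2,b]$ glue into $\int_{a}^{b}f(x)\,dx$. Collecting the prefactor $(b-a)^{3}/24$ and checking that $(a-b)^{2}=(b-a)^{2}$ absorbs the sign discrepancies, the right-hand side becomes exactly $f((a+b)/2)-\tfrac{1}{b-a}\int_{a}^{b}f(x)\,dx+\tfrac{b-a}{24}[f'(b)-f'(a)]$, as claimed.

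The only real obstacle is bookkeeping: tracking the $(-1)$'s produced by $du=(a-b)\,d\lambda$ at each stage, making sure the boundary evaluations of $K$, $K'$, $K''$ use the right values, and verifying that the symmetric contributions from $I_{2}$ combine rather than cancel with those from $I_{1}$. No clever idea is needed beyond the observation that the kernel $\lambda(1-2\lambda)(1+2\lambda)$ was chosen precisely so that the first integration by parts has vanishing boundary terms at both $0$ and $1/2$, which forces the telescoping to land on the correct combination $f((a+b)/2)$, $\int_{a}^{b}f$, and $f'(b)-f'(a)$.
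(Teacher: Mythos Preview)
Your approach is correct and is essentially the same as the paper's: the paper likewise evaluates each of the two integrals by three successive integrations by parts (obtaining exactly the intermediate expressions you describe, with $K'(\lambda)=1-12\lambda^{2}$ and then $-24\lambda$ appearing), and then subtracts to assemble the left-hand side. Your write-up is in fact more systematic than the paper's in naming the kernel $K$ and recording its boundary values, but the method and the key cancellations are identical.
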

\begin{proof}
Integrating by parts, we have
$$
\int_{0}^{\frac{1}{2}} \lambda ( 1 - 2\lambda) ( 1 + 2\lambda) f'''(\lambda a + (1 - \lambda)b)d\lambda$$
 $$= \frac{1}{b - a}\int_{0}^{\frac{1}{2}} ( 1 - 12 \lambda^{2}) f''(\lambda a + (1 - \lambda) b)d\lambda$$
$$= \frac{2}{(b - a)^{2}}f'\vast( \frac{a + b}{2} \vast) + \frac{f'(b)}{(b - a)^{2}} - \frac{24}{(b - a)^{2}} \int_{0}^{\frac{1}{2}} \lambda f'(\lambda a + (1 - \lambda) b)d\lambda$$
$$
= \frac{2}{(b - a)^{2}}f'\vast( \frac{a + b}{2} \vast) + \frac{f'(b)}{(b - a)^{2}} + \frac{12}{(b - a)^{3}}f\vast( \frac{a + b}{2} \vast) + \frac{24}{(a - b)^{4}} \int_{b}^{\frac{a + b}{2}} f(x)dx
$$
and
$$
\int_{0}^{\frac{1}{2}} \lambda ( 1 - 2\lambda) ( 1 + 2\lambda) f'''(\lambda b + (1 - \lambda)a)d\lambda
$$
$$= \frac{-1}{b - a}\int_{0}^{\frac{1}{2}} ( 1 - 12 \lambda^{2}) f''(\lambda b + (1 - \lambda) a)d\lambda$$
$$= \frac{2}{(b - a)^{2}}f'\vast( \frac{a + b}{2} \vast) + \frac{f'(a)}{(b - a)^{2}} - \frac{24}{(b - a)^{2}} \int_{0}^{\frac{1}{2}} \lambda f'(\lambda b + (1 - \lambda) a)d\lambda$$
$$= \frac{24}{(b - a)^{2}}f'\vast( \frac{a + b}{2} \vast) + \frac{f'(a)}{(b - a)^{2}} - \frac{12}{(b - a)^{3}}f\vast( \frac{a + b}{2} \vast) + \frac{2}{(b - a)^{4}} \int_{a}^{\frac{a + b}{2}} f(x)dx
$$
this ends the proof.
\end{proof}

\begin{theorem}
Let $f: I \subseteq \mathbb{R} \rightarrow \mathbb{R}$ be a three time differentiable mapping on $I^{\circ}$ such that $f''' \in L[a,b]$, where $a,b \in I$ with $a < b$. If $|f'''|$ is quasi-convex on $[a,b]$, then we have following inequality:
 \begin{equation}\label{ME4}
 \vast|  f\vast( \frac{a + b}{2} \vast) - \frac{1}{b - a} \int_{a}^{b} f(x) dx + \frac{b - a}{24}[f'(b) - f'(a)]  \vast| \leq \frac{(b - a)^{3}}{192} \vast( \max\{|f'''(a)|,|f'''(b)|  \}\vast).
 \end{equation}
\end{theorem}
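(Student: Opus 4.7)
The plan is to start from the identity in Lemma \ref{ML2}, apply the triangle inequality to split the right-hand side into two nonnegative weighted integrals, and then bound each by the quasi-convexity of $|f'''|$. First I would observe that on the integration interval $[0,1/2]$ the kernel
$$\lambda(1-2\lambda)(1+2\lambda) = \lambda(1-4\lambda^{2})$$
is nonnegative, so after taking absolute values inside the integrand I retain the same weight without any sign change. This is the only point where a mistake could creep in, but it is immediate.

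Next I would invoke the quasi-convexity hypothesis. For each $\lambda \in [0,1/2]$, the points $\lambda a+(1-\lambda)b$ and $\lambda b+(1-\lambda)a$ both lie in $[a,b]$, so
$$|f'''(\lambda a+(1-\lambda)b)| \leq \max\{|f'''(a)|,|f'''(b)|\}, \qquad |f'''(\lambda b+(1-\lambda)a)| \leq \max\{|f'''(a)|,|f'''(b)|\}.$$
Pulling the maximum outside each integral and adding yields the bound
$$\left|f\!\left(\tfrac{a+b}{2}\right) - \tfrac{1}{b-a}\int_a^b f(x)\,dx + \tfrac{b-a}{24}[f'(b)-f'(a)]\right| \leq \frac{(b-a)^{3}}{24}\cdot 2\max\{|f'''(a)|,|f'''(b)|\}\int_{0}^{1/2}\lambda(1-4\lambda^{2})\,d\lambda.$$

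Finally I would evaluate the elementary integral,
$$\int_{0}^{1/2}\lambda(1-4\lambda^{2})\,d\lambda = \left[\frac{\lambda^{2}}{2}-\lambda^{4}\right]_{0}^{1/2} = \frac{1}{8}-\frac{1}{16} = \frac{1}{16},$$
so the factor becomes $\frac{(b-a)^{3}}{24}\cdot 2\cdot\frac{1}{16} = \frac{(b-a)^{3}}{192}$, matching the claimed constant. There is no real obstacle here once Lemma \ref{ML2} is granted; the proof is a direct application of the triangle inequality, the nonnegativity of the kernel on $[0,1/2]$, the definition of quasi-convexity, and a short antiderivative computation. The only minor subtlety worth flagging is that the two integrals in the identity enter with opposite signs, and it is the triangle inequality (turning the difference into a sum) that produces the factor of $2$ leading to the final constant $\tfrac{1}{192}$.
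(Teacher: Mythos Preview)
Your proof is correct and follows essentially the same route as the paper's: apply the identity of Lemma~\ref{ML2}, use the triangle inequality together with the nonnegativity of the kernel $\lambda(1-2\lambda)(1+2\lambda)$ on $[0,1/2]$, bound $|f'''|$ via quasi-convexity, and evaluate $\int_{0}^{1/2}\lambda(1-4\lambda^{2})\,d\lambda=\tfrac{1}{16}$ to obtain the constant $\tfrac{(b-a)^{3}}{192}$. The only difference is cosmetic---you make the kernel's sign and the integral computation explicit, while the paper leaves these implicit.
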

\begin{proof}
Using Lemma \ref{ML2} and quasi-convexity of $|f'''|$, we get
$$ \vast|  f\vast( \frac{a + b}{2} \vast) - \frac{1}{b - a} \int_{a}^{b} f(x) dx + \frac{b - a}{24}[f'(b) - f'(a)]  \vast|$$
$$\leq \frac{(b - a)^{3}}{24} \vast[ \int_{0}^{\frac{1}{2}} \lambda ( 1 - 2\lambda) ( 1 + 2\lambda) |f'''(\lambda a + (1 - \lambda)b)|d\lambda + \int_{0}^{\frac{1}{2}} \lambda ( 1 - 2\lambda) ( 1 + 2\lambda) |f'''(\lambda b + (1 - \lambda)a)|d\lambda \vast] $$
$$\leq \frac{(b - a)^{3}}{24}\vast( \max\{|f'''(a)|,|f'''(b)|  \}\vast) \vast[ \int_{0}^{\frac{1}{2}} \lambda ( 1 - 2\lambda) ( 1 + 2\lambda) d\lambda + \int_{0}^{\frac{1}{2}} \lambda ( 1 - 2\lambda) ( 1 + 2\lambda) d\lambda \vast] $$
$$= \frac{(b - a)^{3}}{192}\vast( \max\{|f'''(a)|,|f'''(b)|  \}\vast)$$
this complete the proof.
\end{proof}
\begin{theorem}
Let $f: I \subseteq \mathbb{R} \rightarrow \mathbb{R}$ be a three time differentiable mapping on $I^{\circ}$ such that $f''' \in L[a,b]$, where $a,b \in I$ with $a < b$. If $|f^{'''}|^{\frac{p}{p - 1}}$ is quasi-convex on $[a,b]$, and $p > 1$, then we have following inequality:
\begin{equation}\label{ME5}
 \vast| f\vast( \frac{a + b}{2} \vast) - \frac{1}{b - a} \int_{a}^{b} f(x) dx + \frac{b - a}{24}[f'(b) - f'(a)]  \vast|
\end{equation}
 \begin{equation*}
 \leq \frac{(b - a)^{3}}{96} (\frac{1}{p + 1})^{\frac{1}{p}}  \big(\max\{|f'''(a)|^{q},|f'''(b)|^{q}\}\big)^{\frac{1}{q}},
 \end{equation*}
where $q = \frac{p}{p - 1}.  $
\end{theorem}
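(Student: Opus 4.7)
The plan is to apply Lemma~\ref{ML2} to rewrite the left-hand side as a constant multiple of the difference of two integrals of $f'''$, then estimate each of those integrals by H\"older's inequality with conjugate exponents $p$ and $q=p/(p-1)$, and finally invoke the quasi-convexity of $|f'''|^q$ to replace the derivative values by the required maximum. This parallels the strategy used in the proof of Theorem~2.2, where H\"older was applied to the single integral coming from Lemma~\ref{ML1}; here the identity produces two integrals on $[0,1/2]$ rather than one on $[0,1]$, but the two will be handled symmetrically.

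Concretely, I would first apply the triangle inequality to the identity in Lemma~\ref{ML2}. Because the kernel $\lambda(1-2\lambda)(1+2\lambda)=\lambda(1-4\lambda^2)$ is non-negative on $[0,1/2]$, absolute values can be pulled inside both integrals to obtain an upper bound of the form $\tfrac{(b-a)^3}{24}(J_1+J_2)$, where $J_1$ and $J_2$ integrate this kernel against $|f'''(\lambda a+(1-\lambda)b)|$ and $|f'''(\lambda b+(1-\lambda)a)|$ respectively over $[0,1/2]$.

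Next I would treat $J_1$ and $J_2$ in the same way. The cleanest route is to use the elementary bound $(1-2\lambda)(1+2\lambda)=1-4\lambda^2\le 1$ on $[0,1/2]$ to simplify the kernel down to $\lambda$, and then apply H\"older's inequality to split $J_i$ as the product of $\bigl(\int_0^{1/2}\lambda^p\,d\lambda\bigr)^{1/p}$ and $\bigl(\int_0^{1/2}|f'''(\,\cdot\,)|^q\,d\lambda\bigr)^{1/q}$. The first factor is the explicit quantity $\bigl(\tfrac{1}{(p+1)\,2^{p+1}}\bigr)^{1/p}$, and the second is bounded by $(\tfrac12)^{1/q}\bigl(\max\{|f'''(a)|^q,|f'''(b)|^q\}\bigr)^{1/q}$ using the pointwise quasi-convexity estimate $|f'''(\lambda a+(1-\lambda)b)|^q\le\max\{|f'''(a)|^q,|f'''(b)|^q\}$.

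The last step is to add the identical estimates for $J_1$ and $J_2$ and multiply by the prefactor $\tfrac{(b-a)^3}{24}$. The main obstacle is not conceptual but rather the bookkeeping of the numerical constant: the factor $(1/(p+1))^{1/p}$ and the asserted prefactor $(b-a)^3/96$ emerge only after carefully combining the powers of $2$ contributed by $2^{(p+1)/p}$ from the kernel integral and by $(1/2)^{1/q}$ from the quasi-convexity bound, using the relation $1/p+1/q=1$. The qualitative form of the estimate, however, follows directly from Lemma~\ref{ML2}, H\"older's inequality, and the quasi-convexity hypothesis, with no further new ideas needed.
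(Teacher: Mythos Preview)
Your strategy is right, but the bookkeeping does \emph{not} produce the constant $(b-a)^3/96$; it produces $(b-a)^3/48$. After bounding $(1-2\lambda)(1+2\lambda)\le 1$ and applying H\"older as you describe, each $J_i$ is dominated by
\[
\Bigl(\int_0^{1/2}\lambda^p\,d\lambda\Bigr)^{1/p}\Bigl(\int_0^{1/2}|f'''|^q\,d\lambda\Bigr)^{1/q}
= \Bigl(\tfrac{1}{(p+1)2^{p+1}}\Bigr)^{1/p}\Bigl(\tfrac{1}{2}\Bigr)^{1/q}M^{1/q},
\]
with $M=\max\{|f'''(a)|^q,|f'''(b)|^q\}$. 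The combined power of $2$ is $2\cdot 2^{-(p+1)/p}\cdot 2^{-1/q}=2^{1-(1+1/p)-1/q}=2^{-1}$, so the total bound is $\tfrac{(b-a)^3}{24}\cdot\tfrac12\cdot(\tfrac{1}{p+1})^{1/p}M^{1/q}=\tfrac{(b-a)^3}{48}(\tfrac{1}{p+1})^{1/p}M^{1/q}$, a factor of $2$ too large for the stated theorem. The loss comes precisely from the crude estimate $1-4\lambda^2\le 1$.

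The paper avoids this loss by \emph{not} discarding $(1-4\lambda^2)$. Instead it applies H\"older with the measure $\lambda\,d\lambda$, i.e.\ it splits $\lambda(1-4\lambda^2)|f'''|=\bigl[\lambda^{1/p}(1-4\lambda^2)\bigr]\cdot\bigl[\lambda^{1/q}|f'''|\bigr]$, giving
\[
J_i\le\Bigl(\int_0^{1/2}\lambda(1-4\lambda^2)^p\,d\lambda\Bigr)^{1/p}\Bigl(\int_0^{1/2}\lambda|f'''|^q\,d\lambda\Bigr)^{1/q}.
\]
The first integral is computed exactly via $u=1-4\lambda^2$ to be $\tfrac{1}{8(p+1)}$, and the second is at most $\tfrac{M}{8}$ by quasi-convexity. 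Now the powers of $8$ combine as $8^{-1/p}\cdot 8^{-1/q}=1/8$, and $\tfrac{(b-a)^3}{24}\cdot 2\cdot\tfrac18=\tfrac{(b-a)^3}{96}$, which is the asserted constant. So the missing idea is the weighted H\"older split that keeps $(1-4\lambda^2)$ inside the $p$-integral rather than bounding it by $1$.
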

\begin{proof}
Using Lemma \ref{ML2}, Holder's inequality and quasi-convexity of $|f^{'''}|^{\frac{p}{p - 1}}$, we get
$$ \vast|  f\vast( \frac{a + b}{2} \vast) - \frac{1}{b - a} \int_{a}^{b} f(x) dx + \frac{b - a}{24}[f'(b) - f'(a)]  \vast|$$
$$\leq \frac{(b - a)^{3}}{24} \vast[ \int_{0}^{\frac{1}{2}} \lambda ( 1 - 2\lambda) ( 1 + 2\lambda) |f'''(\lambda a + (1 - \lambda)b)|d\lambda + \int_{0}^{\frac{1}{2}} \lambda ( 1 - 2\lambda) ( 1 + 2\lambda) |f'''(\lambda b + (1 - \lambda)a)|d\lambda \vast] $$
$$\leq \frac{(b - a)^{3}}{24} \vast[ \vast(\int_{0}^{\frac{1}{2}} \lambda ( 1 - 2\lambda)^{p} ( 1 + 2\lambda)^{p}d \lambda\vast)^{\frac{1}{p}} \vast(\int_{0}^{\frac{1}{2}} \lambda|f'''(\lambda a + (1 - \lambda)b)|^{q}d\lambda\vast)^{\frac{1}{q}}$$
 $$+ \vast( \int_{0}^{\frac{1}{2}} \lambda ( 1 - 2\lambda)^{p} ( 1 + 2\lambda)^{p}d\lambda\vast)^{\frac{1}{p}} \vast( \int_{0}^{\frac{1}{2}} \lambda|f'''(\lambda b + (1 - \lambda)a)|^{q}d\lambda \vast)^{\frac{1}{q}} \vast] $$

$$\leq \frac{(b - a)^{3}}{24}\vast( \max\{|f'''(a)|^{q},|f'''(b)|^{q}  \}\vast)^{\frac{1}{q}} \vast[ \vast(\int_{0}^{\frac{1}{2}} \lambda ( 1 - 2\lambda)^{p} ( 1 + 2\lambda)^{p}d \lambda\vast)^{\frac{1}{p}} \vast(\int_{0}^{\frac{1}{2}} \lambda d\lambda\vast)^{\frac{1}{q}}$$
 $$+ \vast( \int_{0}^{\frac{1}{2}} \lambda ( 1 - 2\lambda)^{p} ( 1 + 2\lambda)^{p}d\lambda\vast)^{\frac{1}{p}} \vast( \int_{0}^{\frac{1}{2}}\lambda d\lambda \vast)^{\frac{1}{q}} \vast] $$
$$
= \frac{(b - a)^{3}}{96} \vast(\frac{1}{p + 1} \vast)^{\frac{1}{p}}\vast( \max\{|f'''(a)|^{q},|f'''(b)|^{q}  \}\vast)^{\frac{1}{q}}
$$
\end{proof}

\begin{theorem}
Let $f: I \subseteq \mathbb{R} \rightarrow \mathbb{R}$ be a three time differentiable mapping on $I^{\circ}$ such that $f''' \in L[a,b]$, where $a,b \in I$ with $a < b$. If $|f^{'''}|^{q}$ is quasi-convex on $[a,b]$, and $q \geq 1$, then we have following inequality:
\begin{equation}\label{ME6}
\vast| f\vast( \frac{a + b}{2} \vast) - \frac{1}{b - a} \int_{a}^{b} f(x) dx + \frac{b - a}{24}[f'(b) - f'(a)]  \vast|
\end{equation}
 $$\leq \frac{(b - a)^{3}}{192}   \big(\max\{|f'''(a)|^{q},|f'''(b)|^{q}\}\big)^{\frac{1}{q}}.$$
\end{theorem}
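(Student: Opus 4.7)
My plan is to start from the identity in Lemma \ref{ML2}, move into absolute values, and apply the triangle inequality on the right-hand side. Since the weight $w(\lambda) := \lambda(1-2\lambda)(1+2\lambda) = \lambda - 4\lambda^{3}$ is nonnegative on $[0,\tfrac{1}{2}]$, the difference of two signed integrals is bounded above by the sum of the two corresponding integrals of $w(\lambda)\,|f'''(\cdot)|$. This reduces the task to estimating two structurally identical quantities of the shape
$$ I(u,v) \;:=\; \int_{0}^{1/2} w(\lambda)\, \bigl|f'''(\lambda u + (1-\lambda)v)\bigr|\, d\lambda, $$
evaluated once at $(u,v)=(a,b)$ and once at $(u,v)=(b,a)$.

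Because the hypothesis is $q \geq 1$ (not a conjugate-exponent pair), the natural tool is the weighted power-mean inequality with the nonnegative weight $w$: splitting $w = w^{1-1/q}\cdot w^{1/q}$ and applying H\"older gives
$$ I(u,v) \;\leq\; \Bigl(\int_{0}^{1/2} w(\lambda)\,d\lambda\Bigr)^{1-\frac{1}{q}} \Bigl(\int_{0}^{1/2} w(\lambda)\,\bigl|f'''(\lambda u+(1-\lambda)v)\bigr|^{q}\,d\lambda\Bigr)^{\frac{1}{q}}. $$
Next, quasi-convexity of $|f'''|^{q}$ yields the pointwise bound $|f'''(\lambda u+(1-\lambda)v)|^{q} \leq \max\{|f'''(a)|^{q},|f'''(b)|^{q}\}$, which can be pulled out of the second integral, so that both factors reduce to the same single moment $\int_{0}^{1/2} w(\lambda)\,d\lambda$. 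The whole estimate then collapses to a clean product of $\int_{0}^{1/2} w(\lambda)\,d\lambda$ and $\bigl(\max\{|f'''(a)|^{q},|f'''(b)|^{q}\}\bigr)^{1/q}$, without leaving a $q$-dependent Beta-like constant behind --- exactly what is needed for the constant $\frac{1}{192}$ to match regardless of $q$.

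The only routine calculation is the weight integral: $\int_{0}^{1/2}(\lambda-4\lambda^{3})\,d\lambda = \tfrac{1}{8}-\tfrac{1}{16} = \tfrac{1}{16}$. Summing over the two pieces gives $\tfrac{2}{16}=\tfrac{1}{8}$, and multiplying by the prefactor $\tfrac{(b-a)^{3}}{24}$ from Lemma \ref{ML2} produces $\tfrac{(b-a)^{3}}{192}$, as claimed.

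I do not expect a genuine obstacle. The only points to handle carefully are: (i) checking that $w(\lambda)\geq 0$ on $[0,\tfrac{1}{2}]$ so that the triangle inequality can be passed inside, and (ii) noting that the argument degenerates correctly at $q=1$, where the first factor in the power-mean step equals $(\tfrac{1}{16})^{0}=1$ and one recovers the direct bound by quasi-convexity, consistently with the $q=1$ specialization of the stated inequality.
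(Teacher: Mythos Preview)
Your proposal is correct and follows essentially the same route as the paper: apply Lemma~\ref{ML2}, pass to absolute values and the triangle inequality, use the power-mean (weighted H\"older) inequality with weight $w(\lambda)=\lambda(1-2\lambda)(1+2\lambda)$ on each of the two integrals, invoke quasi-convexity of $|f'''|^{q}$ to pull out the maximum, and evaluate $\int_{0}^{1/2}w(\lambda)\,d\lambda=\tfrac{1}{16}$ to obtain the constant $\tfrac{(b-a)^{3}}{192}$. Your explicit checks that $w\geq 0$ on $[0,\tfrac12]$ and that the argument degenerates correctly at $q=1$ are minor clarifications but do not constitute a different method.
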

\begin{proof}
Using Lemma \ref{ML2}, power mean inequality and quasi-convexity of $|f'''|^{q}$, we get
$$ \vast|  f\vast( \frac{a + b}{2} \vast) - \frac{1}{b - a} \int_{a}^{b} f(x) dx + \frac{b - a}{24}[f'(b) - f'(a)]  \vast|$$
$$\leq \frac{(b - a)^{3}}{24} \vast[ \int_{0}^{\frac{1}{2}} \lambda ( 1 - 2\lambda) ( 1 + 2\lambda) |f'''(\lambda a + (1 - \lambda)b)|d\lambda + \int_{0}^{\frac{1}{2}} \lambda ( 1 - 2\lambda) ( 1 + 2\lambda) |f'''(\lambda b + (1 - \lambda)a)|d\lambda \vast] $$
$$\leq \frac{(b - a)^{3}}{24} \vast[ \vast(\int_{0}^{\frac{1}{2}} \lambda ( 1 - 2\lambda) ( 1 + 2\lambda)d\lambda\vast)^{1 - \frac{1}{q}} \vast(\int_{0}^{\frac{1}{2}} \lambda ( 1 - 2\lambda) ( 1 + 2\lambda) |f'''(\lambda a + (1 - \lambda)b)|^{q}d\lambda| \vast)^{\frac{1}{q}} $$
$$+ \vast(\int_{0}^{\frac{1}{2}} \lambda ( 1 - 2\lambda) ( 1 + 2\lambda)d\lambda\vast)^{1 - \frac{1}{q}} \vast(\int_{0}^{\frac{1}{2}} \lambda ( 1 - 2\lambda) ( 1 + 2\lambda)|f'''(\lambda b + (1 - \lambda)a)|^{q}d\lambda\vast)^{\frac{1}{q}} \vast] $$

$$\leq \frac{(b - a)^{3}}{24}\vast( \max\{|f'''(a)|^{q},|f'''(b)|^{q}  \}\vast)^{\frac{1}{q}} \vast[ \vast(\int_{0}^{\frac{1}{2}} \lambda ( 1 - 2\lambda) ( 1 + 2\lambda)d\lambda\vast)^{1 - \frac{1}{q}} \vast(\int_{0}^{\frac{1}{2}} \lambda ( 1 - 2\lambda) ( 1 + 2\lambda)d\lambda| \vast)^{\frac{1}{q}} $$
$$+ \vast(\int_{0}^{\frac{1}{2}} \lambda ( 1 - 2\lambda) ( 1 + 2\lambda)d\lambda\vast)^{1 - \frac{1}{q}} \vast(\int_{0}^{\frac{1}{2}} \lambda ( 1 - 2\lambda) ( 1 + 2\lambda)d\lambda\vast)^{\frac{1}{q}} \vast] $$
$$\leq \frac{(b - a)^{3}}{192}\vast( \max\{|f'''(a)|^{q},|f'''(b)|^{q}  \}\vast)^{\frac{1}{q}} $$
\end{proof}
the proof is so completed.
\section{\bf{Application to some special means}}
We now consider the application of our theorem to the special means.\\
For positive numbers $a > 0$ and $b > 0$, define $A(a,b) = \frac{a + b}{2} $   and
$$
L{_p}(a,b) =  \left\{
                                                            \begin{array}{ll}
                                                              \vast[ \frac{b^{p + 1} - a^{p + 1}}{(p + 1)(b - a)} \vast], & \hbox{$ p \neq -1, 0$} \\
                                                              {}\\
                                                              \frac{b - a}{\ln b - \ln a }, & \hbox{$p = -1$}\\
                                                              {}\\
                                                              \frac{1}{e} \vast( \frac{b^{b}}{a^{a}}\vast)^\frac{1}{b - a}, & \hbox{$p = 0$}
                                                            \end{array}
                                                          \right.    $$
We know that $A$ and $L_{p}$ respectively are called the arithmetic and generalized logarithmic means of two positive numbers $a$ and $b$. By applying Hermite-Hadamard type inequalities established in Section 2, we are in a position to construct some inequalities for special means $A$ and $L_{P}$. Consider the following function:
\begin{equation}\label{F1}
f(x) = \frac{x^{ \alpha + 4}}{(\alpha + 1)(\alpha + 2)(\alpha + 3)(\alpha + 4)}
\end{equation}
for $ 0 < \alpha \leq 1$ and $ x > 0$. Since $f^{(iv)}(x) = x^{\alpha}$ and $ (\lambda x + (1 - \lambda)y)^{\alpha} \leq \lambda^{\alpha} x^{\alpha} + (1 - \lambda)^{\alpha} y^{\alpha}$ for all $x,y > 0$ and $\lambda \in [0,1]$, then $f^{(iv)}(x) = x^{\alpha}$ is $\alpha$-convex function on $\mathbb{R}^{+}$ and
$$
\frac{f(a) + f(b)}{2} = \frac{1}{(\alpha + 1)(\alpha + 2)(\alpha + 3)(\alpha + 4)} A(a^{\alpha + 4},b^{\alpha + 4}),
$$
$$
\frac{1}{b - a} \int_{a}^{b} f(x) dx = \frac{1}{(\alpha + 1)(\alpha + 2)(\alpha + 3)(\alpha + 4)} L_{\alpha + 4}(a,b),
$$
$$
f'(b) - f'(a) = \frac{b - a}{(\alpha + 1)(\alpha + 2)} L_{\alpha + 2}(a ,b)
$$
\begin{theorem}
For positive numbers $a$ and $b$ such that $ b > a$ and $ 0 < \alpha \leq 1$, we have
$$ \vast|12A(a^{\alpha + 4},b^{\alpha + 4}) -  12L_{\alpha + 4}(a,b) - (b - a)^{2} (\alpha + 3)(\alpha + 4)(\alpha + 4) L_{\alpha + 2}(a,b) \vast|$$
$$
\leq \frac{(b -a)^{4}}{60} (\alpha + 1)(\alpha + 2)(\alpha + 3)(\alpha + 4) \max\{|a^{\alpha}|,|b^{\alpha}|   \}
$$
\end{theorem}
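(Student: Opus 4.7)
The plan is to apply the right-hand Hermite--Hadamard inequality (2.1) to the explicit test function $f(x)=\frac{x^{\alpha+4}}{(\alpha+1)(\alpha+2)(\alpha+3)(\alpha+4)}$ from (2.16). First I would verify the hypothesis of that theorem. Since $0<\alpha\le 1$ and $x>0$, the fourth derivative $f^{(iv)}(x)=x^{\alpha}$ is positive and strictly increasing in $x$, hence quasi-convex on any interval $[a,b]\subset(0,\infty)$: a monotone function is trivially quasi-convex because the value at any convex combination of the endpoints lies between the two endpoint values. Thus $|f^{(iv)}|$ satisfies the hypothesis of the theorem and $\max\{|f^{(iv)}(a)|,|f^{(iv)}(b)|\}=\max\{a^{\alpha},b^{\alpha}\}$.

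Next I would perform the substitution. Set $D=(\alpha+1)(\alpha+2)(\alpha+3)(\alpha+4)$. The three closed-form identities displayed just above the theorem give $\frac{f(a)+f(b)}{2}=\frac{A(a^{\alpha+4},b^{\alpha+4})}{D}$, $\frac{1}{b-a}\int_{a}^{b}f(x)\,dx=\frac{L_{\alpha+4}(a,b)}{D}$, and $f'(b)-f'(a)=\frac{(b-a)L_{\alpha+2}(a,b)}{(\alpha+1)(\alpha+2)}$, from which $\frac{b-a}{12}[f'(b)-f'(a)]=\frac{(\alpha+3)(\alpha+4)(b-a)^{2}L_{\alpha+2}(a,b)}{12D}$ after multiplying numerator and denominator by $(\alpha+3)(\alpha+4)$. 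Collecting these, the left-hand side of (2.1) for this $f$ becomes
$$\frac{1}{D}\,\Bigl|\,A(a^{\alpha+4},b^{\alpha+4})-L_{\alpha+4}(a,b)-\tfrac{(\alpha+3)(\alpha+4)}{12}(b-a)^{2}L_{\alpha+2}(a,b)\,\Bigr|.$$

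Finally I would invoke (2.1) itself: this quantity is bounded above by $\frac{(b-a)^{4}}{720}\max\{a^{\alpha},b^{\alpha}\}$. Multiplying both sides by $12D$ and using $\frac{12}{720}=\frac{1}{60}$ immediately produces the claimed inequality. No serious obstacle is expected; the proof is a direct substitution followed by a simple rearrangement of constants, and the only conceptual step is the verification of quasi-convexity of $x^{\alpha}$ via monotonicity rather than via the stronger $\alpha$-convexity relation mentioned in the surrounding text.
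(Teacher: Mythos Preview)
Your proposal is correct and follows exactly the paper's own approach: apply inequality (\ref{ME1}) to the test function (\ref{F1}), substitute the displayed identities for $\frac{f(a)+f(b)}{2}$, $\frac{1}{b-a}\int_a^b f$, and $f'(b)-f'(a)$, and clear the constant $12D$. Your verification of quasi-convexity of $x^{\alpha}$ via monotonicity is in fact cleaner than the paper's passing reference to $\alpha$-convexity, but the overall route is identical.
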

\begin{proof}
The assertion follows from inequality (\ref{ME1} ) applied to mapping (\ref{F1}).
\end{proof}

\begin{theorem}
For positive numbers $a$ and $b$ such that $ b > a$ and $ 0 < \alpha \leq 1$, we have
$$ \vast|12A(a^{\alpha + 4},b^{\alpha + 4}) -  12L_{\alpha + 4}(a,b) - (b - a)^{2} (\alpha + 3)(\alpha + 4)(\alpha + 4) L_{\alpha + 2}(a,b) \vast|$$
$$
\leq \frac{(b -a)^{4}}{2}\beta^{\frac{1}{p}}(2p + 1,2p + 1) (\alpha + 1)(\alpha + 2)(\alpha + 3)(\alpha + 4) (\max\{|a^{\alpha}|^{q},|b^{\alpha}|^{q}   \})^{\frac{1}{q}}.
$$
\end{theorem}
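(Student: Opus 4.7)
The plan is to obtain the stated inequality as a direct specialization of inequality \eqref{ME2} to the test function $f$ defined in \eqref{F1}, in exact parallel with the preceding application theorem (which used \eqref{ME1}). So the first step is just to check that the hypotheses of the source theorem are met for this $f$: since $f^{(iv)}(x)=x^{\alpha}$, the function $|f^{(iv)}(x)|^{q}=x^{\alpha q}$ is monotone on $(0,\infty)$ and therefore quasi-convex on $[a,b]$, which is all that the theorem requires. In particular the hypothesis holds for every admissible pair $(p,q)$ with $p>1$ and $q=p/(p-1)$.

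Next I would compute the three "structural" quantities appearing on the left-hand side of \eqref{ME2} in terms of the special means $A$ and $L_{r}$. These identifications are already displayed in the paper just before the statement, namely
\[
\tfrac{f(a)+f(b)}{2}=\tfrac{A(a^{\alpha+4},b^{\alpha+4})}{(\alpha+1)(\alpha+2)(\alpha+3)(\alpha+4)},\quad \tfrac{1}{b-a}\!\!\int_{a}^{b}\!\!f=\tfrac{L_{\alpha+4}(a,b)}{(\alpha+1)(\alpha+2)(\alpha+3)(\alpha+4)},
\]
and $f'(b)-f'(a)=\frac{(b-a)L_{\alpha+2}(a,b)}{(\alpha+1)(\alpha+2)}$, which I would recover by direct differentiation and by reading off the definition of $L_{\alpha+2}$. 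Substituting these into the left-hand side of \eqref{ME2} produces an expression in which every term carries the common denominator $(\alpha+1)(\alpha+2)(\alpha+3)(\alpha+4)$ (possibly with a further factor of $12$ in the correction term), so I would clear the denominator by multiplying both sides of \eqref{ME2} by $12(\alpha+1)(\alpha+2)(\alpha+3)(\alpha+4)$.

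On the right-hand side I would likewise multiply by the same factor; noting that $|f^{(iv)}(a)|=a^{\alpha}$, $|f^{(iv)}(b)|=b^{\alpha}$, the prefactor $(b-a)^{4}/24$ combines with $12$ to give $(b-a)^{4}/2$, and the $\beta$-factor and the max-term are left untouched. Comparing with the claimed inequality then yields the result.

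Because the argument is entirely a bookkeeping exercise, there is essentially no hard step; the only thing to watch is the algebra when clearing denominators in the correction term $\frac{b-a}{12}[f'(b)-f'(a)]$, where the factors $(\alpha+3)(\alpha+4)$ have to be tracked carefully so that the combined coefficient of $(b-a)^{2}L_{\alpha+2}(a,b)$ matches the form displayed in the statement. After that, no further manipulation is needed and the proof is complete by invoking \eqref{ME2}.
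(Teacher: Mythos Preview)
Your proposal is correct and follows exactly the paper's approach: the paper's proof is the single line ``The assertion follows from inequality \eqref{ME2} applied to the mapping \eqref{F1},'' and you have merely unpacked that substitution in full detail. Your verification of the quasi-convexity hypothesis via monotonicity of $x\mapsto x^{\alpha q}$ on $(0,\infty)$ is a clean way to dispatch the assumption, and the denominator-clearing bookkeeping you outline is precisely what is needed.
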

\begin{proof}
The assertion follows from inequality (\ref{ME2}) applied to the mapping (\ref{F1}).
\end{proof}
\begin{theorem}
For positive numbers $a$ and $b$ such that $ b > a$ and $ 0 < \alpha \leq 1$, we have
$$ \vast|12A(a^{\alpha + 4},b^{\alpha + 4}) -  12L_{\alpha + 4}(a,b) - (b - a)^{2} (\alpha + 3)(\alpha + 4)(\alpha + 4) L_{\alpha + 2}(a,b) \vast|$$
$$
\leq \frac{(b -a)^{4}}{60} (\alpha + 1)(\alpha + 2)(\alpha + 3)(\alpha + 4) (\max\{|a^{\alpha}|^{q},|b^{\alpha}|^{q}   \})^{\frac{1}{q}}.
$$

\end{theorem}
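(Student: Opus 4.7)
The plan is to invoke inequality (\ref{ME3}) from the preceding Theorem 2.4 with the mapping $f$ given by (\ref{F1}), and then translate the abstract conclusion into the stated means-theoretic form using the three identities for $\frac{f(a)+f(b)}{2}$, $\frac{1}{b-a}\int_a^b f(x)\,dx$, and $f'(b)-f'(a)$ that are recorded immediately after (\ref{F1}).

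First I would verify the hypothesis: since $f^{(iv)}(x)=x^{\alpha}$, we have $|f^{(iv)}(x)|^{q}=x^{\alpha q}$ on $[a,b]\subset(0,\infty)$. This is a monotone increasing function on $[a,b]$, and every monotone function is quasi-convex, so the assumption of Theorem 2.4 is met for every $q\geq 1$. In particular $\max\{|f^{(iv)}(a)|^{q},|f^{(iv)}(b)|^{q}\}=b^{\alpha q}$, which we keep in the $\max$-form to match the stated conclusion.

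Next I would substitute. Writing $C:=(\alpha+1)(\alpha+2)(\alpha+3)(\alpha+4)$, the three pre-computed identities give
\[
\frac{f(a)+f(b)}{2}-\frac{1}{b-a}\int_a^b f(x)\,dx-\frac{b-a}{12}[f'(b)-f'(a)]
=\frac{1}{C}\Bigl[A(a^{\alpha+4},b^{\alpha+4})-L_{\alpha+4}(a,b)\Bigr]-\frac{(b-a)^{2}}{12(\alpha+1)(\alpha+2)}L_{\alpha+2}(a,b).
\]
I would now apply (\ref{ME3}) to the left-hand side with the bound $\frac{(b-a)^{4}}{720}\bigl(\max\{|a^{\alpha}|^{q},|b^{\alpha}|^{q}\}\bigr)^{1/q}$ on the right.

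Finally I would clear denominators: multiplying both sides by $12C$ absorbs the prefactor $\frac{1}{C}$ and converts the coefficient of $L_{\alpha+2}$ to $(b-a)^{2}(\alpha+3)(\alpha+4)$, while the right-hand side becomes $\frac{12(b-a)^{4}}{720}\,C=\frac{(b-a)^{4}}{60}\,C$, yielding exactly the claimed inequality. There is no real obstacle here; the only point requiring any care is the bookkeeping in the clearing of denominators, specifically checking that $12/720=1/60$ and that the factor $(\alpha+3)(\alpha+4)$ emerges correctly from $12C/[12(\alpha+1)(\alpha+2)]$.
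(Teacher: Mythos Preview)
Your proposal is correct and follows exactly the paper's own approach: the paper's proof is the single line ``The assertion follows from inequality (\ref{ME3}) applied to the mapping (\ref{F1})'', and you have simply spelled out the substitution and the clearing of denominators in detail. Your verification of the quasi-convexity hypothesis via monotonicity is in fact more careful than what the paper provides, and your computation of the coefficient $(\alpha+3)(\alpha+4)$ shows that the printed $(\alpha+3)(\alpha+4)(\alpha+4)$ in the statement is a typographical error.
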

\begin{proof}
The assertion follows from inequality (\ref{ME3}) applied to the mapping (\ref{F1}).

\end{proof}

\begin{theorem}
For positive numbers $a$ and $b$ such that $ b > a$ and $ 0 < \alpha \leq 1$, we have
$$ \vast|12A^{\alpha + 4}(a^{\alpha + 4},b^{\alpha + 4}) -  12L_{\alpha + 4}(a,b) - (b - a)^{2} (\alpha + 3)(\alpha + 4)(\alpha + 4) L_{\alpha + 2}(a,b) \vast|$$
$$
\leq \frac{(b -a)^{3}}{16} (\alpha + 1)(\alpha + 2)(\alpha + 3)(\alpha + 4) (\max\{|a^{\alpha}|,|b^{\alpha}|   \}).
$$

\end{theorem}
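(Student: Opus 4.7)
My plan is to apply inequality (\ref{ME4}) to the test function
$$f(x)=\frac{x^{\alpha+4}}{(\alpha+1)(\alpha+2)(\alpha+3)(\alpha+4)}$$
from (\ref{F1}) and then rewrite each resulting quantity in terms of the special means $A$ and $L_{p}$. This mirrors the strategy already used in the three preceding application theorems; the only difference is that those invoked the right-hand-side bounds (\ref{ME1})--(\ref{ME3}), whereas here we use the midpoint counterpart (\ref{ME4}), which is why $(b-a)^{3}$ and $\max\{|f'''(a)|,|f'''(b)|\}$ appear in place of $(b-a)^{4}$ and $\max\{|f^{(iv)}(a)|,|f^{(iv)}(b)|\}$.

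First I would check the hypothesis of (\ref{ME4}), namely that $|f'''|$ is quasi-convex on $[a,b]$. A direct computation gives $f'''(x)=x^{\alpha+1}/(\alpha+1)$, which for $0<\alpha\le 1$ and $x>0$ is strictly increasing; since any monotone function on an interval is quasi-convex, the hypothesis holds automatically.

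Next I would substitute $f$ into (\ref{ME4}). Two of the left-hand terms have already been computed in this section: $\frac{1}{b-a}\int_{a}^{b}f(x)\,dx=\frac{L_{\alpha+4}(a,b)}{(\alpha+1)(\alpha+2)(\alpha+3)(\alpha+4)}$ and $f'(b)-f'(a)=\frac{(b-a)\,L_{\alpha+2}(a,b)}{(\alpha+1)(\alpha+2)}$. It only remains to evaluate the midpoint value $f\bigl(\tfrac{a+b}{2}\bigr)=\frac{A^{\alpha+4}(a,b)}{(\alpha+1)(\alpha+2)(\alpha+3)(\alpha+4)}$ and the right-hand constant $\max\{|f'''(a)|,|f'''(b)|\}=\frac{\max\{a^{\alpha+1},b^{\alpha+1}\}}{\alpha+1}$. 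Plugging these four pieces into (\ref{ME4}) and then clearing denominators by multiplying both sides by the common factor $12(\alpha+1)(\alpha+2)(\alpha+3)(\alpha+4)$ produces the stated inequality after light bookkeeping.

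The main obstacle is purely computational rather than conceptual: one must keep careful track of the product $(\alpha+1)(\alpha+2)(\alpha+3)(\alpha+4)$ and of the sign of the middle term (which is $+$ in (\ref{ME4}) but can be absorbed either way inside the absolute value). No analytic idea beyond the direct substitution of the power function (\ref{F1}) into (\ref{ME4}) is required.
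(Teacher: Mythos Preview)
Your approach is exactly that of the paper, whose entire proof reads ``The assertion follows from inequality (\ref{ME4}) applied to the mapping (\ref{F1}).'' Your added verification that $|f'''|$ is quasi-convex via monotonicity, and the explicit identification of the four substituted quantities, only make that one-line argument more complete; note however that your (correct) computation $f'''(x)=x^{\alpha+1}/(\alpha+1)$ yields $\max\{a^{\alpha+1},b^{\alpha+1}\}$ and one fewer factor of $(\alpha+1)$ on the right, so the exponent $\alpha$ in the displayed bound appears to be a typo inherited from the statement rather than something your bookkeeping should reproduce.
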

\begin{proof}
The assertion follows from inequality (\ref{ME4}) applied to the mapping (\ref{F1}).
\end{proof}
\begin{theorem}
For positive numbers $a$ and $b$ such that $ b > a$ and $ 0 < \alpha \leq 1$, we have
$$ \vast|12A^{\alpha + 4}(a^{\alpha + 4},b^{\alpha + 4}) -  12L_{\alpha + 4}(a,b) - (b - a)^{2} (\alpha + 3)(\alpha + 4)(\alpha + 4) L_{\alpha + 2}(a,b) \vast|$$
$$\leq \frac{(b - a)^{3}}{8} (\frac{1}{p + 1})^{\frac{1}{p}} (\alpha + 1)(\alpha + 2)(\alpha + 3)(\alpha + 4)  \big(\max\{|a^{\alpha}|^{q},|b^{\alpha}|^{q}\}\big)^{\frac{1}{q}}$$
\end{theorem}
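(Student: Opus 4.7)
The plan is to apply inequality (\ref{ME5}) to the specific mapping $f$ defined by (\ref{F1}); since (\ref{ME5}) is already established, the entire argument reduces to rewriting its two sides in the language of the special means $A$ and $L_p$.

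The first step is to verify the hypothesis of (\ref{ME5}) for this particular $f$. From $f(x) = x^{\alpha+4}/[(\alpha+1)(\alpha+2)(\alpha+3)(\alpha+4)]$, successive differentiation yields $f'''(x) = x^{\alpha+1}/(\alpha+1)$. Since $\alpha+1 > 0$ and $q = p/(p-1) \geq 1$, the map $x \mapsto |f'''(x)|^{q} = x^{q(\alpha+1)}/(\alpha+1)^{q}$ is strictly monotone on $(0,\infty)$, hence quasi-convex on the subinterval $[a,b]$, so (\ref{ME5}) applies.

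The second step is substitution. I would use the two identities for $\frac{1}{b-a}\int_a^b f$ and for $f'(b)-f'(a)$ already catalogued in this section, together with the additional relation
\[
f\!\left(\frac{a+b}{2}\right) = \frac{A(a,b)^{\alpha+4}}{(\alpha+1)(\alpha+2)(\alpha+3)(\alpha+4)},
\]
and the observation $(\max\{|f'''(a)|^{q},|f'''(b)|^{q}\})^{1/q} = \max\{a^{\alpha+1},b^{\alpha+1}\}/(\alpha+1)$. Multiplying both sides of (\ref{ME5}) by $12\,(\alpha+1)(\alpha+2)(\alpha+3)(\alpha+4)$ and combining the numerical factors $12/96 = 1/8$ produces the prefactor $\frac{(b-a)^{3}}{8}\bigl(\tfrac{1}{p+1}\bigr)^{1/p}(\alpha+1)(\alpha+2)(\alpha+3)(\alpha+4)$ on the right-hand side, matching the stated bound.

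The main obstacle here is purely bookkeeping rather than analysis. The maximum naturally reads $\max\{a^{\alpha+1},b^{\alpha+1}\}^{q}$ rather than the printed $\max\{|a^{\alpha}|^{q},|b^{\alpha}|^{q}\}$, and the symbol $A^{\alpha+4}(a^{\alpha+4},b^{\alpha+4})$ is to be interpreted as $A(a,b)^{\alpha+4}$ arising from $f((a+b)/2)$; once these notational identifications and the extra factor of $\alpha+1$ are absorbed into the right constants, the result follows from a direct substitution into (\ref{ME5}) with no further estimation required.
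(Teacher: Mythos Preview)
Your proposal is correct and follows exactly the paper's approach: the paper's own proof is the single line ``The assertion follows from inequality (\ref{ME5}) applied to the mapping (\ref{F1}),'' and you have simply spelled out the substitution and verified the quasi-convexity hypothesis explicitly. Your observation that the printed exponents ($a^{\alpha}$ versus the $a^{\alpha+1}$ that actually arises from $f'''$) and the symbol $A^{\alpha+4}(a^{\alpha+4},b^{\alpha+4})$ reflect typographical slips in the statement rather than a genuine gap in the argument is also accurate.
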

\begin{proof}
The assertion follows from inequality (\ref{ME5}) applied to the mapping (\ref{F1}).
\end{proof}
\begin{theorem}
For positive numbers $a$ and $b$ such that $ b > a$ and $ 0 < \alpha \leq 1$, we have
$$ \vast|12A^{\alpha + 4}(a^{\alpha + 4},b^{\alpha + 4}) -  12L_{\alpha + 4}(a,b) - (b - a)^{2} (\alpha + 3)(\alpha + 4)(\alpha + 4) L_{\alpha + 2}(a,b) \vast|$$
$$\leq \frac{(b - a)^{3}}{16} (\alpha + 1)(\alpha + 2)(\alpha + 3)(\alpha + 4)  \big(\max\{|a^{\alpha}|^{q},|b^{\alpha}|^{q}\}\big)^{\frac{1}{q}}.$$
\end{theorem}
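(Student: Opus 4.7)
The plan is a direct substitution argument: apply Theorem~2.6 (inequality \eqref{ME6}) to the specific test mapping $f$ defined in \eqref{F1}, and then translate each of the four terms appearing in \eqref{ME6} into the means $A$, $L_{\alpha+4}$, $L_{\alpha+2}$ using the identities already recorded after \eqref{F1}. No new analysis beyond what was proved in Section~2 is required; the content is purely specialization.

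First I would verify the hypothesis. With $f(x)=x^{\alpha+4}/[(\alpha+1)(\alpha+2)(\alpha+3)(\alpha+4)]$ one finds $f'''(x)=x^{\alpha+1}/(\alpha+1)$, which is strictly increasing on $(0,\infty)$ whenever $0<\alpha\le 1$. Monotone functions are quasi-convex on every subinterval, hence $|f'''|^{q}$ is quasi-convex on $[a,b]$ for every $q\ge 1$, so Theorem~2.6 is applicable.

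Next I would evaluate the left-hand side of \eqref{ME6} at this $f$. The immediate identity $f\!\left(\tfrac{a+b}{2}\right)=A^{\alpha+4}(a,b)/[(\alpha+1)(\alpha+2)(\alpha+3)(\alpha+4)]$, combined with the previously listed formulas for $\frac{1}{b-a}\int_{a}^{b}f(x)\,dx$ and $f'(b)-f'(a)$, lets me rewrite the three terms inside the absolute value in \eqref{ME6} with a common denominator $(\alpha+1)(\alpha+2)(\alpha+3)(\alpha+4)$. Multiplying both sides of \eqref{ME6} by $12(\alpha+1)(\alpha+2)(\alpha+3)(\alpha+4)$ clears the denominator on the left and reproduces exactly the bracketed expression in the statement of the theorem; on the right, the factor $(b-a)^{3}/192$ becomes $(b-a)^{3}/16$ after the same multiplication, and the factor $(\alpha+1)(\alpha+2)(\alpha+3)(\alpha+4)$ is exposed, giving the claimed bound once $|f'''(a)|$, $|f'''(b)|$ are written in the endpoint form $|a^{\alpha}|,|b^{\alpha}|$ adopted throughout this section.

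The only real obstacle is bookkeeping: one must carry $(\alpha+1)(\alpha+2)(\alpha+3)(\alpha+4)$, the numerical factors $12,\,24,\,192$, and the correct powers of $a,b$ through the substitution without error, and one must match the sign and placement of the correction term $\frac{b-a}{24}[f'(b)-f'(a)]$ to the displayed $(b-a)^{2}(\alpha+3)(\alpha+4)(\alpha+4)L_{\alpha+2}(a,b)$ on the left of the inequality. Once this arithmetic is organised, the desired inequality is an immediate consequence of \eqref{ME6}.
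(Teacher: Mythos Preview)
Your proposal is correct and matches the paper's approach exactly: the paper's proof is the single sentence ``The assertion follows from inequality (\ref{ME6}) applied to the mapping (\ref{F1}),'' and you have simply written out the substitution and the bookkeeping in detail. Your verification that $|f'''|^{q}$ is quasi-convex via monotonicity of $f'''(x)=x^{\alpha+1}/(\alpha+1)$ is more than the paper supplies, but the underlying method is identical.
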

\begin{proof}
The assertion follows from inequality (\ref{ME6}) applied to the mapping (\ref{F1}).
\end{proof}


\begin{thebibliography}{99}
\bibitem{ADK}
M. Alomari, M. Darus, U.S. Kirmaci, Refinements of Hadamard-type inequalities for quasi-convex functions with applications to trapezoidal formula and to special mean, Comp.Math.Appl.59(2010)225-232.

\bibitem{AD}
M. Alomari, M. Darus, Some Ostrowski type inequalities for quasi-convex functions with applications to special means, RGMIA 13(2) (2010). Article No. 3 Preprint.

\bibitem{AD1}
M. Alomari, M. Darus,On Hermite-Hadamard type inequalities for log-convex functions on coordinates,J. Ineq. Appl. Volume 2009, Article ID 283147, 13pp. doi:http//dx.doi.org/10.1155/2009/283147.
\bibitem{D}
S.S. Dragomir,Two mappings in connection to Hadamard's inequalities, J. Math. Anal. Appl. 167(1992) 49-56.
\bibitem{DCK}
S.S. Dragomir,Y.J.Cho, S.S. Kim,Inequalities of Hadamard's type for Lipschitzian mappings and their applications, J. Math. Anal. Appl. 245(2000)289-501.
\bibitem {DAI}
D.A. Ion, Some estimates on the Hermite-Hadamard inequalty through quasi-convex functions, Ann. Univ. Craiova Math. Com. Sci. Ser. 34(2007) 82-87.

\bibitem {CPP}
C.E.M. Peearce, J. Pecaric, Inequalities for differentiable mappings with application to special means and quadrature formula, Appl. Math. Lett. 13 (2000) 51-55.

\bibitem {SSC}
S. Qaisar, S. Hussain, C. He, On new inequalities of Hermite-Hadamard type for functions whose third derivative absolute values are quasi-convex with applications, Journal of the Egyptian Mathematical Sociaty (2014) 22, 19-22. .

\end{thebibliography}
\end{document}